\newtheorem{theorem}{Theorem}
\newtheorem{lemma}{Lemma}
\newtheorem{proposition}[lemma]{Proposition}
\numberwithin{lemma}{section}
\newcommand{\Ez}{E_0}
\newcommand{\E}{E}
\newcommand{\M}{\mathfrak M}
\numberwithin{equation}{section}
\newcommand{\tW}{{\tilde W}}
\newcommand{\tQ}{{\tilde Q}}
\renewcommand{\AA}{\mathbf A}
\newcommand{\WH}{{\mathcal{WH} }}
\newcommand{\uu}{{\mathbf u}}
\newcommand{\qq}{{\mathbf q}}
\newcommand{\ww}{{\mathbf w}}
\renewcommand{\ggg}{{\mathbf g}}
\newcommand{\tG}{\tilde{G}}
\newcommand{\tK}{\tilde{K}}
\newcommand{\sgn}{\mathop{\mathrm{sgn}}}
\newcommand{\dH}{{\dot{\mathcal H} }}
\newcommand{\W}{{\mathbf W}}
\renewcommand{\S}{{\mathbf S}}
\newcommand{\err}{\text{\bf err}}
\begin{document}

\title{Two dimensional water waves in holomorphic coordinates II: global solutions}

\author{Mihaela Ifrim}
\address{Department of Mathematics, University of California at Berkeley}
\thanks{The first author was supported by the Simons Foundation}
\email{ifrim@math.berkeley.edu}

\author{ Daniel Tataru}
\address{Department of Mathematics, University of California at Berkeley}
 \thanks{The second author was partially supported by the NSF grant DMS-1266182
as well as by the Simons Foundation}
\email{tataru@math.berkeley.edu}

\begin{abstract}
  This article is concerned with the infinite depth water wave
  equation in two space dimensions.  We consider this problem
  expressed in position-velocity potential holomorphic coordinates,
and prove that small localized data leads to global solutions.
This article is a continuation of authors' earlier paper \cite{HIT}.
\end{abstract}

\maketitle

\section{Introduction}
We consider the two dimensional infinite depth  water wave equations with gravity but without surface tension.  
This is governed by the incompressible Euler's equations with boundary conditions on the
water surface. Under the additional assumption that the flow is
irrotational the fluid dynamics can be expressed in terms of a
one-dimensional evolution of the water surface coupled with the trace
of the velocity potential on the surface. 

This problem was previously considered by many other authors. The
local in time existence and uniqueness of solutions was proved in
\cite{n,y, wu2}, both for finite and infinite depth.  Later,
Wu~\cite{wu} proved almost global existence for small localized data.
Very recently, global results for small localized data were
independently obtained by Alazard-Delort~\cite{ad} and by
Ionescu-Pusateri~\cite{ip}. Extensive work was also done on the same
problem in three or higher space dimensions, and also on related
problems with surface tension, vorticity, finite bottom, etc. Without
being exhaustive, we list some of the more recent references
\cite{abz, abz1, chs, cl,cs, DL, HL, o, sz, zz}.

An essential choice in any approach to this problem is that of the
coordinates used. The citations above largely rely on either Eulerian
or Lagrangian coordinates.  Instead, the present article relies on
holomorphic coordinates, which were originally introduced by
Nalimov~\cite{n}; these are briefly described below.  In the earlier article
\cite{HIT}, using holomorphic coordinates, we revisited this problem
in order to provide a new, self-contained approach, which considerably
simplified and improved on many of the results mentioned above. Our
results included:

(i) local well-posedness in Sobolev spaces, improving on previous
regularity thresholds, e.g. those in \cite{abz}.

(ii) cubic lifespan bounds for small data. These are proved using a \emph{modified energy
method}, first introduced in the authors' previous article \cite{BH}. The idea there 
is that instead of trying to transform the equation using  the normal form method, which 
does not work well in quasilinear settings, one can produce 
quasilinear energy functionals, which are conserved to cubic order.

(iii) almost global well-posedness for small localized data, refining and 
simplifying Wu's approach in \cite{wu}. 

Here we improve the result in (iii) to a global statement,
drastically  improving and simplifying the earlier results of
Alazard-Delort~\cite{ad} and by Ionescu-Pusateri~\cite{ip}.

We first recall the set-up and the equations.  We denote the water
domain at time $t$ by $\Omega(t)$, and the water surface at time $t$ by
$\Gamma(t)$. We think of $\Gamma(t)$ as being asymptotically flat at
infinity.  Rather than working in cartesian coordinates and the
Eulerian setting, we use time dependent coordinates defined via a
conformal map $\mathcal{F}: \mathbb{H} \to \Omega(t)$, where $\mathbb{H}$ is the lower half plane, $\mathbb{H}:=\left\lbrace  \alpha +i\beta\ : \ \beta <0 \right\rbrace $.
 We also have $\mathcal{F}(\mathbf{R}) = \Gamma(t)$.  We call these the holomorphic
coordinates.  

The real variable $\alpha$ is then used to parametrize
the free surface $\Gamma(t)$.  We say that a function of $\alpha$ is
holomorphic if its Fourier transform is supported in $(-\infty,0]$.
They can be described by the relation $Pf = f$,
where the projector operator $P$ to negative frequencies can be
defined using the Hilbert transform $H$ as
\begin{equation*}
P:= \frac12(I-iH).
\end{equation*}

Our variables $(Z,Q)$ are functions of $t$ and $\alpha$ which  
represent the position of the water surface $\Gamma (t)$,
respectively the holomorphic extension of the velocity
potential restricted to $\Gamma(t)$, expressed in the holomorphic coordinates.
In view of our choice of coordinates, it is natural to consider
the evolution of $(Z,Q)$  within the closed subspace of
holomorphic functions within various Sobolev spaces.

In position-velocity potential holomorphic coordinates the equations have the form
\begin{equation*}
\left\{
\begin{aligned}
& Z_t + F Z_\alpha = 0 \\
& Q_t + F Q_\alpha -i (Z-\alpha) + P\left[ \frac{|Q_\alpha|^2}{J}\right]  = 0, \\
\end{aligned}
\right.
\end{equation*}
where
\begin{equation*}
%\label{rww}
 F := P\left[ \frac{Q_\alpha - \bar Q_\alpha}{J}\right] , \qquad J := |Z_\alpha|^2.
\end{equation*}
For the derivation of the above equations we refer the reader to \cite{HIT},
 \emph{Appendix A}. With the substitution $W := Z-\alpha$
they become
\begin{equation}
\label{ww2d1}
\left\{
\begin{aligned}
& W_t + F (1+W_\alpha) = 0 \\
& Q_t + F Q_\alpha -i W + P\left[ \frac{|Q_\alpha|^2}{J}\right]  = 0, \\
\end{aligned}
\right.
\end{equation}
where
\begin{equation*}
%\label{rww2d}
 F = P\left[\frac{Q_\alpha - \bar Q_\alpha}{J}\right], \qquad J = |1+W_\alpha|^2.
 \end{equation*}

We can also differentiate and rewrite the system in terms of the 
diagonal variables 
\[
(\W,R): = \left(W_\alpha, \frac{Q_\alpha}{1+W_\alpha}\right).
\]
This yields the self-contained system
\begin{equation} \label{ww2d-diff}
\left\{
\begin{aligned}
 & \W_{ t} + b \W_{ \alpha} + \frac{(1+\W) R_\alpha}{1+\bar \W}   =  (1+\W)M
\\
& R_t + bR_\alpha = i\left(\frac{\W - a}{1+\W}\right),
\end{aligned}
\right.
\end{equation}
where  the real {\em advection velocity} $b$ is given by
\begin{equation*}
b := P \left[\frac{{Q}_\alpha}{J}\right] +  \bar P\left[\frac{\bar{Q}_\alpha}{J}\right],
%\label{defb1}
\end{equation*}
and  the real {\em frequency-shift} $a$ is 
\begin{equation*}
a := i\left(\bar P \left[\bar{R} R_\alpha\right]- P\left[R\bar{R}_\alpha\right]\right).
%\label{defa}
\end{equation*}
The auxiliary function $M$ has the expression
\begin{equation*}
%\label{M-def}
M :=  \frac{R_\alpha}{1+\bar \W}  + \frac{\bar R_\alpha}{1+ \W} -  b_\alpha =
\bar P [\bar R Y_\alpha- R_\alpha \bar Y]  + P[R \bar Y_\alpha - \bar R_\alpha Y],
\end{equation*}
written in terms  of  $Y$ given by 
\[
Y := \frac{\W}{1+\W}.
\]

In particular, we remark that the linearization of the system
\eqref{ww2d1} around the zero solution is a dispersive partial differential equation of the form
\begin{equation} \label{ww2d-0} \left\{
\begin{aligned}
 & w_{ t} +  q_\alpha   =  0
\\
& q_t - i w = 0.
\end{aligned}
\right.
\end{equation}

Now we recall the function spaces introduced in \cite{HIT}. The system \eqref{ww2d-0} is a
well-posed linear evolution in the space $\dH_0$ of holomorphic
functions endowed with the $L^2 \times \dot H^{\frac12}$ norm. A
conserved energy for this system is
\begin{equation*}
%\label{E0}
\Ez (w,r) = \int \frac12 |w|^2 + \frac{1}{2i} (r \bar r_\alpha - \bar r r_\alpha) \, d\alpha.
\end{equation*}
The nonlinear system \eqref{ww2d1} also admits a conserved energy, which has the form
\begin{equation*}
%\label{ww-energy}
\E(W,Q) = \int \frac12 |W|^2 + \frac1{2i} (Q \bar Q_\alpha - \bar Q Q_\alpha)
- \frac{1}{4} (\bar W^2 W_\alpha + W^2 \bar W_\alpha)\, d\alpha.
\end{equation*}

 As suggested by the above energy, the function spaces for the 
 differentiated water wave system \eqref{ww2d-diff} are the
spaces $\dH_n$ endowed with the norm
\[
\| (\W,R) \|_{\dH_n}^2 := \sum_{k=0}^n
\| \partial^k_\alpha (\W,R)\|_{ L^2 \times \dot H^\frac12}^2,
\]
where $n \geq 1$. 

To describe the lifespan of the solutions we define the control norms
\begin{equation*}
%\label{A-def}
A := \|\W\|_{L^\infty}+\| Y\|_{L^\infty} + \|D^\frac12 R\|_{L^\infty \cap B^{0,\infty}_{2}},
\end{equation*}
respectively
\begin{equation*}
%\label{B-def}
B :=\|D^\frac12 \W\|_{BMO} + \| R_\alpha\|_{BMO}.
\end{equation*}
Here $A$ is a scale invariant quantity related to the critical 
homogeneous $\dH_\frac12$ norm of $(\W, R)$,  while $B$ corresponds to the
homogeneous $\dH_1$ norm of $(\W, R)$. We note that $B$ and all but
the $Y$ component of $A$ are directly controlled by the $\dH_1$ norm of the
solution.  

The main local well-posedness result in \cite{HIT} is
\begin{theorem}
\label{baiatul}
 Let  $ n \geq 1$.  The system \eqref{ww2d-diff} is locally well-posed
for data in $\dH_n(\mathbb{R})$ so that $|\W+1| > c > 0$.
Further, the solution can be continued for as long as $A$ and $B$ 
remain bounded. 
\end{theorem}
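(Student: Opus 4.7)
The plan is to combine a priori energy estimates at the $\dH_n$ level with a regularization scheme. First I would construct approximate solutions by, for instance, truncating the nonlinearity of \eqref{ww2d-diff} at frequencies $|\xi|\le 2^N$; at each level the equation becomes an ODE on a Banach space and solutions exist for any finite $N$. The central analytic ingredient is then an a priori energy estimate producing a uniform-in-$N$ $\dH_n$ bound that depends only on the control norms $A$ and $B$, so that a common lifespan and a limit can be extracted.

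For the energy estimate, I would work directly with the diagonal system \eqref{ww2d-diff}. The linearization around zero, \eqref{ww2d-0}, is a Hamiltonian dispersive flow whose conserved quadratic form is $\Ez$. This suggests the $\dH_n$-coercive ansatz
\begin{equation*}
E^n(\W,R) = \sum_{k=0}^{n} \Ez\bigl(\partial_\alpha^k \W,\, \partial_\alpha^k R\bigr) + \text{quasilinear corrections},
\end{equation*}
where the corrections are chosen to cancel the apparent loss of derivatives that appears when $\partial_\alpha^n$ is commuted past the transport term $b\partial_\alpha$ and past the coupling $(1+\W)R_\alpha/(1+\bar\W)$. These corrections play the role of a partial normal form adapted to the quasilinear structure, in the spirit of the modified-energy method of \cite{BH}. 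The target bound is
\begin{equation*}
\frac{d}{dt} E^n(\W,R) \le C(A)\, B\, E^n(\W,R),
\end{equation*}
after which Gronwall yields the $\dH_n$ bound on a time interval whose length depends only on the initial value of $A+B$.

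The main difficulty is organizing the cancellations that prevent derivative loss. The terms in which $\partial_\alpha^{n+1}$ falls on $\W$ in the first equation must be paired with the corresponding top-order terms of the $R$ equation through the symplectic structure of $\Ez$, using that $P$ and $\bar P$ are orthogonal projectors onto complementary spectral half-spaces and that the factors $1/(1+\W)$ and $1/(1+\bar\W)$ combine naturally through the auxiliary variable $Y$. The non-degeneracy assumption $|1+\W|>c>0$ is essential both for inverting these coefficients and for guaranteeing the Taylor sign condition, i.e.\ positivity of the effective gravity coefficient $1+a$, that underpins the dispersive character of the problem. Once the top-order pairings are arranged, the remaining commutators involve at most $n$ derivatives of $(\W,R)$ in $L^2$ times factors in $BMO$ controlled by $B$, and close via standard paraproduct and commutator estimates together with the usual $L^\infty$ smallness coming from $A$.

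Having the a priori bound in hand, I would pass to the limit in the regularized problems to obtain an $\dH_n$ solution on a time interval of length $T(\|(\W,R)(0)\|_{\dH_n})$. Uniqueness and continuous dependence follow from an analogous energy estimate performed on the difference of two solutions at a lower regularity level, where the difference system is essentially linear with coefficients tamed by the already-obtained $\dH_n$ bound; this additional step is needed because the equation is quasilinear and a straight contraction mapping in $\dH_n$ would require bounds on $\dH_{n+1}$. The continuation criterion is then an immediate corollary of the a priori estimate: since the growth of $E^n$ is governed only by $A+B$, any $\dH_n$ solution extends past a time $T^*$ whenever $A$ and $B$ remain bounded up to $T^*$.
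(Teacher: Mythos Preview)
This theorem is not proved in the present paper; it is quoted verbatim from the companion article \cite{HIT} (see the sentence immediately preceding the theorem: ``The main local well-posedness result in \cite{HIT} is''). So there is no proof here to compare your proposal against.

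That said, your outline is broadly in the spirit of what \cite{HIT} does: the argument there is indeed built around carefully designed quasilinear energy functionals at the $\dH_n$ level whose time derivative is controlled by $A$ and $B$, combined with a regularization to produce approximate solutions and a difference estimate at lower regularity for uniqueness and continuous dependence. One point to be careful about: you write that the hypothesis $|1+\W|>c>0$ ``guarantees the Taylor sign condition, i.e.\ positivity of the effective gravity coefficient $1+a$''. This is not automatic in general; non-degeneracy of the conformal map and the Taylor sign condition are distinct hypotheses. In the setting of \cite{HIT} the Taylor sign condition is obtained because $a$ is at least quadratic and the relevant norms are small (or, more precisely, it is propagated as part of the a priori control), not as a consequence of $|1+\W|>c$. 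If you intend this sketch to stand on its own you should separate these two issues.
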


To state the global result we need to return to the original set of
variables $(W,Q)$.  We also take advantage of the scale invariance of
the water wave equations. Precisely, it is invariant with respect to
the scaling law
\[
(W(t,\alpha), Q(t,\alpha)) \to (\lambda^{-2} W(\lambda t,\lambda^2 \alpha),
 \lambda^{-3} Q(\lambda t,\lambda^2 \alpha)).
\]
This suggests that we should use the scaling vector field
\[
S = t \partial_t + 2 \alpha \partial_\alpha,
\]
and its action on the pair $(W,Q)$, namely
\[
\S(W,Q) := ((S-2)W,(S-3)Q),
\]
which solve the linearized equations, see \cite{HIT}.
However, these are not the correct diagonal variables;
instead the diagonal variables are $\AA\S(W,Q)$,
where the diagonalization operator $\AA$ is given by 
\[
 \AA(w,q) := (w, q-Rw).
\]
Then we  define the weighted energy
\begin{equation*}
%\label{WH}
\|(W,Q)(t)\|_{\WH}^2 :=  \|(W,Q)(t)\|_{\dH_0}^2 + \|(\W,R)(t)\|_{\dH_5}^2 
+ \|\AA\S(W,Q)(t)\|_{\dH_1}^2.
\end{equation*}
To control the evolution of the weighted energy we still use 
a pointwise type control norm, but one which is somewhat stronger 
than $A$ and $B$. Precisely, we define
\begin{equation*}
%\label{def-X}
\| (W,R)\|_{X} := \|W\|_{L^\infty} + \|R\|_{L^\infty} +  \|D^2 W\|_{L^\infty} + \| |D|^\frac32 R\|_{L^\infty}.
\end{equation*}

Now we can state our main result:

\begin{theorem} \label{t:global}
a) (Global solutions) Let  $\epsilon \ll 1$. Then for each initial data
$(W(0),Q(0))$ for the system \eqref{ww2d1} satisfying
\begin{equation}\label{data}
\|(W,Q)(0)\|_{\WH}^2  \leq \epsilon,
\end{equation}
the solution is global, 
and satisfies
\begin{equation}\label{energy}
\|(W,Q)(t)\|_{\WH}^2  \lesssim \epsilon t^{C\epsilon^2},
\end{equation}
as well as 
\begin{equation}\label{point}
\|(W,R)\|_{X}  \lesssim \frac{\epsilon}{\sqrt{t}}.
\end{equation}

b) (Asymptotic profile) There exists a function $\Psi$ satisfying
\begin{equation*}
\| (1+v^{-2})^{-5}  \Psi\|_{L^2} + \| v \partial_v \Psi\|_{L^2} \lesssim \epsilon ,
\end{equation*}
so that we have the asymptotic formulas
\begin{equation}
\label{asymptotics}
\begin{split}
(W,Q)(t,\alpha)  = & \ \frac{1}{\sqrt{t}} e^{i\frac{t^2}{4\alpha}} \Psi(\alpha/t)
e^{\frac{i}2 (2\alpha/t)^{-5} \ln{t} |\Psi(\alpha/t)|^2}\left(1,\frac{t}{2\alpha}\right) + e_\alpha(t,\alpha/t),
\\
(\hat W,\hat Q)(t,\xi)  = &  \sum_{-4v^2 = \xi}  |v|^{-\frac32}
 e^{\pm  i t  \sqrt{|\xi|}} \left[ \Psi\left(v \right)
e^{\frac{i}2 (2v)^{-5} \ln{t} |\Psi(v)|^2} (1,(2v)^{-1}) + e_\xi(t,\xi)\right] ,
\end{split}
\end{equation}
where the last sum has two terms, depending on the sign of $v$, 
and the errors $e_\alpha$ and $e_\xi$, satisfy bounds of the form 
\begin{equation}
\label{asy-err}
\begin{split}
 e_\alpha(t,v) =  \ O_{ L^\infty}(\epsilon t^{-\frac{5}9}), \quad 
e_\xi(t,\xi) = & \ O_{ L^\infty}(\epsilon t^{-\frac1{18}}).
\end{split}
\end{equation}

\end{theorem}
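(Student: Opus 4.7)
The plan is a continuity/bootstrap argument. On some time interval $[0,T]$ one assumes the energy growth \eqref{energy} and the pointwise bound \eqref{point} hold with constants $2C$, and shows they must hold with constants $C$. Since Theorem~\ref{baiatul} gives local well-posedness with continuation whenever $A$ and $B$ remain bounded, and since both control norms are dominated by the bootstrap assumptions (through Sobolev embedding on one side and the $X$ norm on the other), closing the argument yields the global statement in part (a); part (b) is a separate, largely ODE-based analysis of the profile along rays $\alpha/t = \mathrm{const}$ that uses the bounds from (a) as input.

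For the energy bound, each of the three pieces of the $\WH$ norm should be handled by a cubic modified energy whose time derivative is quartic, following the method of \cite{HIT,BH}. The low-regularity part $\|(W,Q)\|_{\dH_0}^2$ is controlled via the exact cubic conserved energy $\E$; the high-regularity part $\|(\W,R)\|_{\dH_5}^2$ by higher Sobolev cubic energies already constructed in \cite{HIT}; and the weighted part $\|\AA\S(W,Q)\|_{\dH_1}^2$ by observing that $\AA\S(W,Q)$ solves the linearized water wave system modulo cubic terms --- this is where one uses that $\S$ is a symmetry at the linear level and that $\AA$ removes the quadratic obstruction to diagonalization of the linearized flow around the nonlinear solution. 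Bounding the resulting quartic error by $(A+B)^2 \|(W,Q)\|_{\WH}^2 \lesssim \frac{\epsilon^2}{t} \|(W,Q)\|_{\WH}^2$ using the bootstrap pointwise estimate, Gronwall yields the $t^{C\epsilon^2}$ growth.

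The pointwise estimate \eqref{point} is the main new analytic ingredient, and in my view the main obstacle. The idea is to prove a Klainerman-Sobolev-type inequality for the water wave equation, converting control of the $\WH$ norm into the $X$-norm pointwise bound. Heuristically, control of $\S(W,Q)$ replaces the weighted $L^2$ input that ordinary stationary phase applied to the linear flow of \eqref{ww2d-0} would require to produce the $t^{-1/2}$ decay dictated by the group velocity $\sim|\xi|^{-1/2}$; the normal form correction $\AA$ is essential because $\S$ alone does not satisfy a linear equation modulo cubic errors. The delicate points are the handling of the nonlocal holomorphic projector $P$ inside $F$, $b$, and $M$, which does not commute cleanly with $\S$, and the avoidance of derivative losses in the quadratic part of the nonlinearity --- the latter is essentially why one has to go through $\AA$ and not just $\S$. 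The indices $\dH_5$ and $\dH_1$ in $\WH$ are tuned so that both halves of the Klainerman-Sobolev inequality close with the right frequency separation.

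For the asymptotic profile \eqref{asymptotics}, one introduces $\Psi(t,\xi) = e^{-it\sqrt{|\xi|}} \hat R(t,\xi)$ (with an analogous object built from $\W$). The stationary phase point for the linear flow at position $\alpha$ and time $t$ is $\xi = -t^2/(4\alpha^2)$, equivalently the ray variable $v = \alpha/t$ with $\xi = -4v^2$. The quadratic part of the nonlinearity is nonresonant for this dispersion relation and is removed by a normal form, while the cubic part has exactly one resonant interaction producing an ODE of schematic form
\[
\partial_t \Psi(t,v) \ = \ \frac{ic}{t}\,(2v)^{-5}\,|\Psi(t,v)|^2\, \Psi(t,v) \ + \ O(t^{-1-\delta}),
\]
with the remainder controlled by \eqref{energy} and \eqref{point}. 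Integrating this ODE yields the logarithmic phase correction, and the frequency- and physical-space formulas in \eqref{asymptotics} are related by stationary phase in the linear flow; the error bounds \eqref{asy-err} record the sharpness of the stationary phase approximation at the rate permitted by the energy and pointwise bounds.
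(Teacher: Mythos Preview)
Your bootstrap framework and treatment of the energy estimates are correct and match the paper: the cubic modified energies from \cite{HIT} give \eqref{energy} directly from the pointwise bootstrap assumption, exactly as you describe.

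The genuine gap is in your proof of the pointwise bound \eqref{point}. A Klainerman--Sobolev type inequality, however sharp, can only convert the $\WH$ norm into pointwise decay, and the $\WH$ norm \emph{grows} like $\epsilon t^{C\epsilon^2}$ under the bootstrap. So the best such an inequality can yield is
\[
\|(W,R)\|_X \lesssim \epsilon\, t^{-\frac12 + C\epsilon^2},
\]
which does not recover \eqref{point} and closes the bootstrap only up to time $e^{c\epsilon^{-2}}$. This is exactly the almost global result of \cite{wu,HIT}, not the global one. The paper states this explicitly: the Klainerman--Sobolev bound (its Proposition in Section~\ref{s:section2} giving \eqref{first-point}) is used only to reduce matters to the region $\Omega = \{t^{-1/9} \lesssim |\alpha|/t \lesssim t^{1/9}\}$, where it provides no improvement.

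What is actually required to close \eqref{point} inside $\Omega$ is precisely the ODE analysis you relegated to part (b). The paper tests the normal form variables $(\tW,\tQ)$ against a wave packet concentrated on the ray $\alpha = vt$ to define a scalar function $\gamma(t,v)$, shows that $\gamma$ controls $(\tW,\tQ)$ pointwise up to errors already acceptable in $\Omega$ (Proposition~\ref{p:diff}), and then derives the asymptotic ODE
\[
\dot\gamma = \frac{i}{2t(2v)^5}\gamma|\gamma|^2 + \sigma, \qquad \|\sigma\|_{L^\infty} \lesssim \epsilon\, t^{-\frac{19}{18}+C_*^2\epsilon^2}
\]
(Proposition~\ref{p:ode}). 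Since the leading term preserves $|\gamma|$ and the remainder is integrable in $t$, one gets $|\gamma(t,v)| \lesssim \epsilon$ uniformly, which is \eqref{point}. In other words, the ODE is not a separate afterthought for the asymptotics; it is the mechanism that removes the $t^{C\epsilon^2}$ loss and closes the global bootstrap. Your sketch of part (b) is essentially right in spirit (and close to the Fourier-space profile approach of \cite{ad,ip}), but it must be moved inside the proof of part (a); the paper's wave-packet formulation is a physical-space alternative to your $e^{-it\sqrt{|\xi|}}\hat R$ that interacts more cleanly with the holomorphic structure.
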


We recall that results of these type were recently proved in work of
 Alazard-Delort~\cite{ad} and Ionescu-Pusateri~\cite{ip}. 
Our result here, based on the setup in the previous article \cite{HIT},
provides a stronger statement and a much simpler proof. The main
idea of the proof is described in the simpler setting of the one
dimensional cubic NLS in the companion article \cite{IT}. However,
this article does not rely directly on any of the results proved in
\cite{IT}.  While not needed for the proof of the global well-posedness result,
the modified scattering type asymptotic profile in part (b) is easily obtained as
a direct byproduct of our proof. A similar asymptotic profile, but in Eulerian coordinates, was obtained by Alazard and Delort in \cite{ad}.

The organization of the paper is as follows. In the next section we set-up the 
bootstrap argument for the proof of part (a) of the theorem. The energy estimates 
were already proved in \cite{HIT}, and they are simply recalled here. In preparation 
for the proof of the pointwise bounds we also recall the normal form transformation 
associated to the two dimensional water wave equation, along with some related 
energy bounds. The core of the paper is Section~\ref{s:section3}, where we prove the global pointwise
bounds and close the bootstrap argument. Our approach is based on the idea 
of testing with wave packets, which was first developed in \cite{IT}.
The asymptotic profile is readily obtained at the end of the proof.
%%%%%%%%%%%%%%%%%%%%%%%%%%%%%%%%%%%%%%%%%%%%%%%%%%%%%%%%%%%%%%%%%%%%%%%%%%%%%%%%%%%%%%%%%%%%%%%%%%%%%%%%%%%%%%%%%%%%%%%%%
%%%%%%%%%%%%%%%%%%%%%%%%%%%%%%%%%%%%%%%%%%%%%%%%%%%%%%%%%%%%%%%%%%%%%%%%%%%%%%%%%%%%%%%%%%%%%%%%%%%%%%%%%%%%%%%%%%%%%%%%%

\section{The bootstrap setup}
\label{s:section2}
The main difficulty in the proof of the theorem is in establishing the
sharp pointwise decay rate in \eqref{point}. Since in our context
$\|(W,R)(t)\|_{X}$ is a continuous function of $t$, without any loss of generality ,
we can make the bootstrap assumption
\begin{equation}\label{point-boot}
 \|(W,R)(t)\|_{X} \leq \frac{C \epsilon}{\sqrt{t}}
\end{equation}
with a fixed large universal  constant $C$.  Here $C$ is chosen with the property that 
\[
1 \ll C \ll \epsilon^{-\frac12}.
\]
Then  we need to show 
that for any solution satisfying \eqref{data} and \eqref{point-boot} in a time interval $[0,T]$,
we must also have \eqref{energy} and \eqref{point}.

The  energy estimates are nontrivial, but they were already established in \cite{HIT}.
Precisely, from Proposition~6.1 in \cite{HIT} we have

\begin{proposition}
Assume that \eqref{data} and \eqref{point-boot} hold  in a time interval $[0,T]$.
Then we also have the energy estimate
\begin{equation}
\label{e}
  \|(W,Q)(t)\|_{\WH}^2  \lesssim \epsilon t^{C_*^2 \epsilon^2}, \qquad t \in [0,T], \qquad 
C_* \lesssim C.
\end{equation}
\end{proposition}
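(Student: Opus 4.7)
The plan is to adapt the modified energy method to each of the three pieces of the weighted norm $\|\cdot\|_{\WH}$: the base $\dH_0$ norm of $(W,Q)$, the higher Sobolev $\dH_5$ norm of the diagonal variables $(\W,R)$, and the $\dH_1$ norm of the diagonalized scaling derivative $\AA\S(W,Q)$. For each piece I would construct a quasilinear modified energy, coercive and equivalent to the naive norm under the smallness assumption \eqref{point-boot}, whose time derivative consists only of quartic or higher order terms in $(\W,R)$.

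For the base piece I would simply use the exact conserved quantity $\E(W,Q)$ already displayed in the introduction; since it is exactly conserved and cubic-accurate, it gives no contribution at all. For the top-order Sobolev piece I would differentiate \eqref{ww2d-diff} up to five times, and at each level $k \leq 5$ extract the leading transport-type structure from $\partial_\alpha^k (\W,R)$, then add cubic corrections so that $\frac{d}{dt}\Ek$ consists only of quartic terms organized as products of two $L^\infty$-type factors (captured by $A$ or $B$) with two $L^2$/$\dot H^{1/2}$-type factors controlled by the energies themselves. This is precisely the strategy of \cite{HIT}, and the balanced multilinear estimates there (which rely on the holomorphic projection cancellations and the diagonalization to $(\W,R)$) give schematically
\begin{equation*}
\left|\frac{d}{dt}\Ek \right| \lesssim (A^2 + B^2) \cdot \Ek.
\end{equation*}

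For the scaling piece I would use that $\S(W,Q)$ satisfies the inhomogeneous linearized equation around $(W,Q)$, the forcing being a cubic commutator between $\S$ and the nonlinear terms in \eqref{ww2d1}. After applying $\AA$ the resulting system has the same transport/dispersive structure as \eqref{ww2d-diff}, so a fully analogous cubic-corrected energy $\widetilde{E}$ exists at the level $\dH_1$; its time derivative again reduces to quartic terms, with the commutator source either absorbed into further modifications of the energy or estimated directly by the other two components of $\WH$ multiplied by $(A+B)^2$.

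Combining the three pieces and invoking \eqref{point-boot}, which yields $A(t)+B(t) \lesssim C\epsilon/\sqrt{t}$, one obtains
\begin{equation*}
\frac{d}{dt}\|(W,Q)(t)\|_{\WH}^2 \lesssim \frac{C_*^2 \epsilon^2}{t}\|(W,Q)(t)\|_{\WH}^2,
\end{equation*}
and Gronwall's inequality delivers \eqref{e}. The main obstacle is the scaling energy: one must verify that the commutator $[\S,\cdot]$ applied to the nonlinearity produces only terms which, after diagonalization by $\AA$, are controlled by pieces of $\|\cdot\|_{\WH}$ already accounted for, with the necessary two factors of $L^\infty$ decay for Gronwall integrability. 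This cancellation is exactly what motivates the specific choice of the diagonal scaling variable $\AA\S(W,Q)$, and the verification is the technical heart of the argument carried out in \cite{HIT}.
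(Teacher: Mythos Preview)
Your proposal is correct and aligns with the paper's treatment: the paper does not give a proof here at all but simply cites Proposition~6.1 of \cite{HIT}, and your outline is a faithful high-level description of precisely the modified energy argument carried out there (cubic-corrected energies for each of the three pieces of $\WH$, quartic remainders bounded by $(A+B)^2$ times the energy, then Gronwall via \eqref{point-boot}). There is nothing to correct or compare; you have essentially unpacked the citation.
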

Thus, the bound \eqref{energy} follows directly from our bootstrap assumption.
A pointwise bound is also established in \cite{HIT}, see Proposition~6.2:
\begin{proposition}
Assume that \eqref{e} holds in a time interval $[0,T]$. Then we  also  have 
 the pointwise bounds
\begin{equation}\label{first-point}
|W|+|W_\alpha|+|W_{\alpha \alpha}| + |D^\frac12 Q| + |R| + |R_\alpha|+|D^\frac12 R_\alpha|
\lesssim  \epsilon t^{C_*^2 \epsilon^2} \omega(\alpha,t),
\end{equation}
where 
\[
\omega(\alpha,t) := \frac{1}{t^\frac{1}{18}} + \frac{1}{(|\alpha|/t + t/|\alpha|)^\frac12}.
\]
\end{proposition}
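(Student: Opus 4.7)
The plan is to extract the pointwise bound from the weighted energy norm $\WH$ via a Klainerman-Sobolev type argument, using the diagonalized scaling vector field as the weight generator. The target weight $\omega(\alpha, t)$ splits into two pieces: a uniform decay $t^{-1/18}$ valid everywhere, and a spatial profile $(|\alpha|/t + t/|\alpha|)^{-1/2}$ which provides no decay on the characteristic wave cone $|\alpha| \sim t$ but yields enhanced control when $|\alpha|$ is either much smaller or much larger than $t$.

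The energy hypothesis \eqref{e} provides $\|(\W, R)\|_{\dH_5} \lesssim \epsilon^{1/2} t^{C_*^2 \epsilon^2/2}$ together with the scaling bound on $\|\AA \S(W, Q)\|_{\dH_1}$. Sobolev embedding applied to $\dH_5$ alone yields no decay in $t$; the $t^{-1/18}$ factor must emerge from the scaling control. I would perform a dyadic Littlewood-Paley decomposition in frequency and bound each piece in $L^\infty$ via Bernstein's inequality, using the high regularity estimate when $2^k$ is large, the basic $L^2$ estimate when $2^k$ is small, and an interpolation between the scaling bound and the Sobolev bound at intermediate frequencies. Optimizing this three-way interpolation between $\dH_5$, $\dH_0$, and the one scaling derivative produces the $t^{-1/18}$ rate.

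The spatial weight $(|\alpha|/t + t/|\alpha|)^{-1/2}$ is obtained by exploiting the identity $\alpha \partial_\alpha = \tfrac12 (S - t \partial_t)$, where the $t \partial_t$ term is eliminated using the equation \eqref{ww2d-diff} itself. This converts the scaling bound into a weighted $L^2$ estimate with an $\alpha$-factor, which through a weighted one-dimensional Gagliardo-Nirenberg inequality produces the $(t/|\alpha|)^{1/2}$ decay for $|\alpha| \gg t$. The symmetric decay $(|\alpha|/t)^{1/2}$ for $|\alpha| \ll t$ comes from the dual Fourier-side argument: the linear dispersion relation $|\xi| \sim (t/|\alpha|)^2$ pairs small $|\alpha|$ with large frequency, which is suppressed by the Bernstein factor against $\dH_5$.

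The main obstacle, and the reason for the diagonalization $\AA$, is that the scaling vector field $S$ does not commute with the quasilinear system \eqref{ww2d1}; only the diagonalized variable $\AA \S(W, Q)$ satisfies a well-behaved linearized equation. To recover pointwise bounds on $\S(W, Q)$ and hence on $(W, Q)$ itself, one inverts $\AA: (w, q) \mapsto (w, q - Rw)$ using the bootstrap pointwise control \eqref{point-boot} on $R$. The various fractional derivative quantities in the stated bound, namely $D^{1/2} Q$, $D^{1/2} R_\alpha$, and $W_{\alpha\alpha}$, are then recovered by frequency-localized variants of the same argument, combined with the $\dH_5$ hypothesis on $(\W, R)$ to supply the extra derivatives.
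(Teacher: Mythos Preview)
The paper does not prove this proposition; it is imported from \cite{HIT}, Proposition~6.2, and no argument is reproduced here. So there is no in-paper proof to compare your sketch against line by line.

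From the surrounding discussion, however, one can infer that the argument in \cite{HIT} is organized differently from your outline. The paper states that Lemma~6.3 of \cite{HIT} is used to show that \eqref{first-point} for $(W,R)$ is equivalent to its counterpart \eqref{first-point-tilde} for the normal form variables $(\tW,\tQ_\alpha)$, and the vector-field bound actually established there is \eqref{tvf}, which is written for $(\tW,\tQ)$ rather than for $\AA\S(W,Q)$. So the route in \cite{HIT} is: pass to the normal form pair $(\tW,\tQ)$, which obeys the linear-plus-cubic system \eqref{cubic}; run the Klainerman--Sobolev argument there, where replacing $t\partial_t$ via the equation introduces only the linear operator plus cubic errors; then transfer back to $(W,R)$ via \eqref{point-comp}. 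Your plan to stay with $(W,Q)$, invert $\AA$, and use \eqref{ww2d-diff} to eliminate $t\partial_t$ is in the same Klainerman--Sobolev spirit, but at the substitution step it has to absorb the full quadratic nonlinearity of \eqref{ww2d-diff}, which the normal form detour is designed to avoid.

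One small gap: you invoke \eqref{point-boot} to invert $\AA$, but the stated hypothesis of the proposition is only \eqref{e}. Within the paper's bootstrap layout this is harmless since \eqref{point-boot} is in force, but a self-contained proof should not rely on it; the normal form route sidesteps the issue entirely because no inversion of $\AA$ is needed once one works with $(\tW,\tQ)$.
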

We remark that this proposition gives the bound 
\begin{equation}\label{first-pointX}
\|(W,R)\|_{X}  \lesssim  \epsilon t^{C _{*}^2\epsilon^2} t^{-\frac12},
\end{equation}
which is only sufficient in order to close the bootstrap up to an
exponential time $T$, $T \lesssim e^{-c \epsilon^{-2}}$, and thus prove 
the almost global result.  The bound 
\eqref{first-pointX} will not be so useful to us since we already have the bootstrap
assumption \eqref{point-boot}. However, we can get more use out of
\eqref{first-point}; what \eqref{first-point} shows is 
that there is extra decay away from $|\alpha| \approx t$, so it suffices to
improve the pointwise bound in \eqref{first-point} in a region of the
form
\begin{equation}
\Omega = \left\{ t^{-\frac19} \lesssim \frac{|\alpha|}{t} \lesssim t^\frac19 \right\}.
\end{equation}
Here the threshold $\frac19$ was chosen somewhat arbitrarily; any smaller power
would work as well.

The goal of the remainder of the paper is to establish the 
bound \eqref{point} in $\Omega$. A key tool in this endeavor is the 
normal form transformation, which is discussed next.

\subsection{The normal form transformation}

For the almost global result in \cite{HIT}, as well as for the global
result here, a very useful observation is that the quadratically
nonlinear terms may be removed from the water-wave equations by the
near-identity, normal form transformation
\begin{equation}
\tilde W = W - 2 \M_{\Re W} W_\alpha, \qquad \tilde Q = Q - 2 \M_{\Re W} R,
\label{nft1}
\end{equation}
where the holomorphic multiplication operator $\M_f$ is given by $\M_f
g = P\left[ fg\right] $.  For a more symmetric form of this
transformation, one can replace $R$ by $Q_{\alpha}$. However, it is
more convenient to use the diagonal variable $R$.  The goal of normal form
transformation is to remove the quadratic terms in the equation.
Precisely, we have
\begin{proposition}
The normal form variables solve a cubic equation:
\begin{equation}\label{cubic}
\left\{
\begin{aligned}
&\tW_t + \tQ_\alpha = \tG
\\
&\tQ_t - i \tW =  \tK,
\end{aligned}
\right.
\end{equation}
where $\tG$, $\tK$ are cubic (and higher order) functions of
$(W, \W,R, \W_{\alpha}, R_{\alpha})$, given by
\begin{equation}\label{gk-tilde}
\left\{
\begin{aligned}
\tilde G =  &\  2P[ (F - R)_\alpha \Re W + \W_{ \alpha} F\Re W + \W \Re (\W F)+F_{\alpha }\W\Re W]
\\ & \ -  P[\bar \W R \bar Y - \W(P[\bar R Y]+\bar P[R \bar Y])]
\\
\tilde K = & \  P\left[ ((1+\bar \W)\bar F- \bar R)R + 2P [bR_\alpha]\Re W   +     2i P \left[ \frac{\W^2 + a}{1+\W}\right]\Re W \right] .
\end{aligned}
\right.
\end{equation}
\end{proposition}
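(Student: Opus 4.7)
The strategy is direct computation: differentiate the definitions of $\tW$ and $\tQ$ in time using system \eqref{ww2d1}, compute $\tQ_\alpha$ and $\tilde W$ by elementary manipulations, and verify by explicit algebra that all quadratic contributions cancel, leaving only the stated cubic and higher residues which we then organize into the claimed forms.

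First I would compute $\tW_t + \tQ_\alpha$. Applying $\partial_t$ to $\tW = W - 2\M_{\Re W} W_\alpha$ and substituting $W_t = -F(1+W_\alpha)$, and applying $\partial_\alpha$ to $\tQ = Q - 2\M_{\Re W} R$, I get an exact expression
\[
\tW_t + \tQ_\alpha = (Q_\alpha - F) - F \W - 2P[\Re W_t \cdot \W] - 2P[\Re W \cdot \W_t] - 2P[\Re \W \cdot R] - 2P[\Re W \cdot R_\alpha].
\]
The next step is to expand $F = P[(Q_\alpha - \bar Q_\alpha)/J]$ using $J = |1+\W|^2$ and $1/J = 1 - 2\Re \W + O(\text{quadratic})$, and to replace $Q_\alpha = R(1+\W)$. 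Since $Q_\alpha, R, \W$ are holomorphic while $\bar Q_\alpha, \bar \W$ are antiholomorphic, the projector $P$ acts as identity on holomorphic products and annihilates purely antiholomorphic ones, which forces several cancellations at quadratic order. After substituting the linear-order identities $W_t = -Q_\alpha + \lot$ and $\W_t = -R_\alpha + \lot$ into the time-derivative terms, the quadratic contributions from $(Q_\alpha - F)$, from the $-2P[\Re W_t \cdot \W]$ pieces, and from the $-2P[\Re \W \cdot R]$ piece are seen to pair up and vanish identically. The surviving terms are cubic and, after rewriting $F - Q_\alpha$, $F_\alpha$, $b$ and $M$ via their definitions and the formulas in \eqref{ww2d-diff}, reorganize into the expression for $\tG$.

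The second equation is handled analogously. I would compute
\[
\tQ_t - i \tW = (Q_t - iW) - 2P[\Re W_t \cdot R] - 2P[\Re W \cdot R_t] + 2i P[\Re W \cdot \W],
\]
and substitute $Q_t = -F Q_\alpha + iW - P[|Q_\alpha|^2/J]$ and the equation for $R_t$ from \eqref{ww2d-diff}. The linear piece $Q_t - iW$ equals $-P[|Q_\alpha|^2/J] - FQ_\alpha$, whose quadratic expansion $-P[|R|^2]$ (after using $Q_\alpha = R(1+\W)$) is exactly cancelled by the $2iP[\Re W \cdot \W]$ term combined with the quadratic part of $-2P[\Re W_t \cdot R]$ evaluated via $W_t = -Q_\alpha + \lot$; the remaining quadratic contribution from $R_t = -iR \cdot \W + \lot$ is absorbed analogously. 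What remains is cubic, and substituting the exact $R_t$ from \eqref{ww2d-diff} produces the terms $2P[bR_\alpha]\Re W$ and $2iP[(\W^2 + a)/(1+\W)]\Re W$, while the leftover from $-FQ_\alpha - P[|Q_\alpha|^2/J]$ rearranges into $P[((1+\bar \W)\bar F - \bar R)R]$ after using $Q_\alpha = R(1+\W)$ and the definition of $F$.

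The main obstacle is purely bookkeeping: tracking quadratic cancellations across several products of holomorphic and antiholomorphic factors, and rearranging the cubic remainders into the compact form \eqref{gk-tilde} using the identities $Y = \W/(1+\W)$, $a = i(\bar P[\bar R R_\alpha] - P[R \bar R_\alpha])$, $b = P[Q_\alpha/J] + \bar P[\bar Q_\alpha/J]$, and the characterization of $M$ given above. The conceptual content is simply that the normal form \eqref{nft1} was constructed precisely to eliminate the quadratic resonant terms of the linearized equation \eqref{ww2d-0} on the holomorphic subspace; the explicit algebraic identification of the cubic remainders with $\tG$ and $\tK$ is then forced by this cancellation.
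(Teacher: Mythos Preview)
Your approach is correct and is essentially the only way to establish this proposition: differentiate the normal form definitions, substitute the evolution equations, verify the quadratic cancellations, and reorganize the remainder. The paper itself does not reproduce the proof here; the proposition is stated as a known fact carried over from the companion paper \cite{HIT}, where this computation is performed. So your outline matches what the proof must be.

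One small slip in your sketch: in the second equation you write ``the remaining quadratic contribution from $R_t = -iR \cdot \W + \lot$''. In fact the linearization of the $R$-equation in \eqref{ww2d-diff} gives $R_t = i\W + \lot$, not $-iR\W$ (which is already quadratic). The cancellation you are after still works: the quadratic part of $-2P[\Re W \cdot R_t]$ is $-2iP[\Re W \cdot \W]$, which cancels exactly against the $+2iP[\Re W \cdot \W]$ coming from $-i\tW$. Similarly, for $Q_t - iW = -FQ_\alpha - P[|Q_\alpha|^2/J]$ the quadratic part is $-P[R^2] - P[R\bar R]$, and this is cancelled by the quadratic part of $-2P[\Re W_t \cdot R] = 2P[\Re Q_\alpha \cdot R] + \lot = P[(R+\bar R)R] + \lot$. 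With these corrections in place, the bookkeeping proceeds exactly as you describe.
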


We remark here that the normal form transformation cannot be used directly 
to study well-posedness questions for the water wave equation as the cubic and higher
order terms on the right are higher order than the leading linear part. However, 
in \cite{HIT} we were able to use it in order to derive the pointwise bounds
in \eqref{first-point}, and here we will be able to further use it to get to \eqref{point}.

In order to work with $(\tW,\tQ)$ instead of $(W,Q)$ we need to be able to 
transfer the energy information from $(W,Q)$ to $(\tW,\tQ)$, and the 
pointwise bounds in the opposite direction. This was also done in \cite{HIT}:

\begin{proposition}
Assume that the energy bound \eqref{e}  holds in a time interval $[0,T]$. 
Then we have the following estimates for $(\tW,\tQ)$:  

(i) Energy estimates:
\begin{equation}\label{te}
\|(\tW,\tQ)\|_{\dH_5} \lesssim \epsilon t^{C_{*}^2\epsilon^2},
\end{equation}
\begin{equation}\label{tvf}
\| (2 \alpha \partial_\alpha \tilde W +  t \partial_\alpha \tilde Q ,  
2\alpha \partial_\alpha \tilde Q - it \tilde W )\|_{\dH_0} \lesssim \epsilon t^{C_{*}^2 \epsilon^2}.
\end{equation}

(ii) Pointwise comparison:
\begin{equation}\label{point-comp}
\| (W-\tW, R - \tQ_\alpha)\|_{X} 
\lesssim  \epsilon^2 t^{-\frac58+2C_{*} \epsilon^2} .
\end{equation}
\end{proposition}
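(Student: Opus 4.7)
The normal form transformation writes $(\tW, \tQ) = (W, Q) + \text{quadratic}$, with
\[
\tW - W = -2P[\Re W\cdot W_\alpha], \qquad \tQ - Q = -2P[\Re W\cdot R],
\]
so every claim of the proposition reduces to controlling these quadratic corrections (and variants obtained by applying $\partial_\alpha$ or the scaling vector field $S$). The inputs are the energy bound \eqref{e}, the pointwise bound \eqref{first-point}, and the bootstrap assumption \eqref{point-boot}. For the energy bound \eqref{te}, $L^2$-boundedness of the projector $P$ combined with a Moser-type product estimate gives
\[
\|P[\Re W\cdot W_\alpha]\|_{H^5} \lesssim \|W\|_{L^\infty}\|W\|_{H^6} + \|W\|_{H^5}\|W_\alpha\|_{L^\infty},
\]
and analogously for the $Q$-correction; the $L^\infty$ factors are $O(\epsilon t^{-1/2+C_*^2\epsilon^2})$ by \eqref{first-pointX}, the Sobolev norms are $O(\epsilon t^{C_*^2\epsilon^2})$ by \eqref{e}, so the correction is much smaller than $\epsilon t^{C_*^2\epsilon^2}$.

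For the weighted bound \eqref{tvf}, express both entries of the left-hand side using the cubic equations \eqref{cubic} as appropriate linear combinations of $(S-2)\tW$, $\tW$ and $(S-3)\tQ$, $\tQ$, modulo cubic correctors of the form $t\tG$, $t\tK$. The zero-order $\tW, \tQ$ pieces are handled by part (i); the vector-field pieces $(S-2)\tW$, $(S-3)\tQ$ are related via the normal form transformation to the corresponding quantities $(S-2)W$, $(S-3)Q$ for the original variables. The commutator of $S$ with the normal form is again quadratic and is bounded by estimates of the same type as in the first step, using $[S, P] = 0$ and $[S, \M_f] = \M_{Sf}$. Then the combination $\AA\S(W,Q) = ((S-2)W, (S-3)Q - R(S-2)W)$ lies in $\dH_1 \subset \dH_0 \times \dot H^{1/2}$ by the $\WH$ hypothesis, and we recover $(S-3)Q$ from the second component by absorbing $R(S-2)W$ via a pointwise-times-$L^2$ estimate. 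The cubic remainders $t\tG, t\tK$ from \eqref{gk-tilde} are trilinear; extracting one $L^\infty$ factor of size $O(\epsilon t^{-1/2+\delta})$ absorbs the extra $t$ factor.

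For the pointwise comparison \eqref{point-comp}, expand
\[
W - \tW = 2P[\Re W\cdot W_\alpha], \qquad R - \tQ_\alpha = -R\,W_\alpha + 2P[\partial_\alpha(\Re W\cdot R)],
\]
using $Q_\alpha = R(1+W_\alpha)$. For the $L^\infty$ pieces of the $X$ norm, the bootstrap \eqref{point-boot} gives each factor size $O(\epsilon t^{-1/2})$, and the $\mathrm{BMO}$-boundedness of $P$ yields $O(\epsilon^2 t^{-1})$, well below the target. For the $\|D^2\cdot\|_{L^\infty}$ and $\||D|^{3/2}\cdot\|_{L^\infty}$ pieces, the Leibniz rule produces terms in which one factor requires higher regularity; these are controlled by interpolating between the bootstrap pointwise bound and the $\dH_5$ bound from \eqref{e}, and it is this interpolation loss that produces the exponent $t^{-5/8+2C_*\epsilon^2}$ in \eqref{point-comp}.

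The most delicate step is the weighted estimate \eqref{tvf}: a priori the terms $t\tG, t\tK$ grow in time, and the commutator of $S$ with the normal form could create further high-growth contributions. Both difficulties are resolved by the scale invariance of the water wave system, which ensures that the normal form is scaling-homogeneous of matching weight and that the trilinear structure of $\tG, \tK$ provides enough dispersive decay in at least one factor to absorb the extra $t$.
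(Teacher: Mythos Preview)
Your sketches for \eqref{te} and \eqref{point-comp} are essentially correct and in the spirit of the arguments the paper cites from \cite{HIT}: the quadratic structure of the normal form corrections, together with one or two pointwise factors from \eqref{first-point} or \eqref{point-boot}, yields the needed Sobolev and $X$-norm bounds by standard product estimates. (One small imprecision: for the cubic remainders $t\tG$, $t\tK$ you say that ``extracting one $L^\infty$ factor'' absorbs the explicit $t$; a single $t^{-1/2}$ factor leaves $t^{1/2}$ growth, so you must place \emph{two} of the three factors in $L^\infty$ to get the required $t^{-1}$.)

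The argument for the weighted bound \eqref{tvf}, however, has a genuine gap. You assert that the two entries of \eqref{tvf} can be written as linear combinations of $(S-2)\tW$, $(S-3)\tQ$, $\tW$, $\tQ$ modulo cubic correctors $t\tG$, $t\tK$. If one actually uses the cubic equations \eqref{cubic} to eliminate the time derivatives in $S=t\partial_t+2\alpha\partial_\alpha$, one obtains
\[
(S-2)\tW+2\tW-t\tG \;=\; 2\alpha\tW_\alpha - t\tQ_\alpha,
\qquad
(S-3)\tQ+3\tQ-t\tK \;=\; 2\alpha\tQ_\alpha + it\tW,
\]
i.e.\ the pair with the \emph{opposite} signs on the $t$-terms from those in \eqref{tvf}. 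The discrepancy is exactly $(2t\tQ_\alpha,\,-2it\tW)$, which grows linearly in $t$ in $\dH_0$ and cannot be absorbed into $(S-2)\tW$, $(S-3)\tQ$, $\tW$, $\tQ$ or into cubic correctors. So the substitution you describe does not close, and scale invariance alone does not repair it. The paper's proof defers to a specific computation (``(6.13)'') in \cite{HIT} together with Lemmas~6.4--6.5 there; that route relates the pair in \eqref{tvf} to $\AA\S(W,Q)$ through the normal form by a direct algebraic identity, and uses the full $\dH_1$ (not merely $\dH_0$) control on $\AA\S(W,Q)$ coming from the $\WH$ norm.
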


For the proof of this result we refer the reader to the corresponding
results in \cite{HIT} as follows.  The energy estimates in \eqref{te}
are contained in Lemma~6.4. The bound \eqref{tvf} is based on the
computation in (6.13), so it requires both Lemma~6.4 and
Lemma~6.5. The estimate \eqref{point-comp} is a consequence of
Lemma~6.3, and is used in \cite{HIT} to prove that the bound \eqref{first-point}
for $(W,R)$ is equivalent to its counterpart for  $(\tW,\tQ_\alpha)$, namely 
\begin{equation}\label{first-point-tilde}
|\tW|+|\tW_{\alpha \alpha}| + ||D|^\frac12 \tQ| + |\tQ| + |R_\alpha|+||D|^\frac12 \tQ_{\alpha\alpha}|
\lesssim  \epsilon t^{C_*^2 \epsilon^2} \omega(\alpha,t).
\end{equation}

We remark that in view of \eqref{point-comp}, for the proof of
\eqref{point}, it suffices to obtain the uniform bounds associated to
$(\tW,\tQ)$,
\begin{equation}\label{point-t}
|\tW|+|\tW_{\alpha \alpha}| + ||D|^\frac12 \tQ| +||D|^\frac12 \tQ_{\alpha\alpha}|
\lesssim  \epsilon t^{-\frac12}
\end{equation}
in the region $\Omega$.

Next we consider the right hand side terms $\tG$ and $\tK$ in the
equations for $(\tW,\tQ)$.  It is convenient to decompose them into
cubic and higher terms. 
\[
\tG = \tG^{(3)} + \tG^{(4+)}, \qquad \tK = \tK^{(3)} + \tK^{(4+)},
\]
where 
\begin{equation}
\label{gk-tilde1}
\left\{
\begin{aligned}
\tilde G^{(3)} = & \  2P[ -\partial_{\alpha}P\left[ R\bar{ \W}-\bar{R}\W\right] \Re W +(R \W)_{ \alpha}  \Re W + \W \Re (\W R)]   \\ & \ -  P[\bar \W^2  R- \W(P[\bar R \W]+\bar P[R \bar \W]) ]\\
\tilde K^{(3)} = &\  2P\left[ P [(R+\bar R)R_\alpha]\Re W   +     i \W^2 \Re W + P[R \bar R_\alpha] \Re W \right] \\ & \ +P\left[\bar{R}\bar{\W}R -R\bar{P}\left[ \bar{R}\W-R\bar{\W} \right]\right].
\end{aligned}
\right.
\end{equation}
The higher order terms play a perturbative role in the long time
behavior, because of the better decay. However, the cubic terms may
drive the asymptotic dynamics, and need to be considered in greater
detail.  For this reason, it is also convenient to express the cubic
terms as cubic in $(\tW,\tQ_\alpha)$ rather than $(W,R)$.  For the
perturbative part of the terms above we have the following

\begin{proposition}
Assume that the bound \eqref{e} holds. Then we have
\begin{equation}\label{GK4}
\| (\tilde G^{4+},  \tilde K^{4+})\|_{\dH_0} \lesssim \epsilon^4 t^{-\frac32+3C_{*}^2 \epsilon^2},
\end{equation}
respectively
\begin{equation}
\label{GK-diff}
\| (\tilde G^{3}(W,R),  \tilde K^{3}(W,R)) - (\tilde G^{3}(\tW,\tQ_\alpha),  \tilde K^{3}(\tW,\tQ_\alpha))
\|_{\dH_0} \lesssim \epsilon^4 t^{-\frac32+3C_{*}^2 \epsilon^2}.
\end{equation}
\end{proposition}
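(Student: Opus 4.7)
The plan is to treat both estimates by multilinear H\"older-type bounds, exploiting that $\tG^{(4+)}, \tK^{(4+)}$ and the cubic expressions $\tG^{(3)}, \tK^{(3)}$ are finite sums of multilinear forms in the basic variables $(W,\W,R,\W_\alpha,R_\alpha)$, together with bar-conjugates and nested holomorphic projections $P$. The two enabling inputs are: (a) from the bootstrap assumption \eqref{point-boot} combined with Gagliardo--Nirenberg interpolation, each of $W,\W,\W_\alpha,R,R_\alpha$ (as well as $|D|^{1/2}R$, $|D|^{1/2}R_\alpha$) satisfies $\|\cdot\|_{L^\infty} \lesssim \epsilon t^{-1/2}$; (b) from the energy bound \eqref{e}, each of the same quantities satisfies $\|\cdot\|_{L^2} \lesssim \epsilon t^{C_*^2\epsilon^2}$ (with the $\dot H^{1/2}$ variant for $R$-derivatives also available). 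Combining three pointwise factors of size $\epsilon t^{-1/2}$ with one $L^2$ factor of size $\epsilon t^{C_*^2\epsilon^2}$ produces exactly the target size $\epsilon^4 t^{-3/2+3C_*^2\epsilon^2}$.

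For part (i), I would first unfold \eqref{gk-tilde} using the Taylor expansions $1/J = 1 - 2\Re\W + O(|\W|^2)$, $F+R = $ quadratic-and-higher in the basic variables, $Y = \W - \W^2 + O(|\W|^3)$, and analogous expansions for $b, a, M$, so as to write $\tG^{(4+)}$ and $\tK^{(4+)}$ as explicit finite sums of multilinear expressions of degree $\geq 4$. For each such term I apply H\"older in $L^2$: one factor in $L^2$ via \eqref{e}, and the remaining (at least three) factors in $L^\infty$ via (a). Inner holomorphic projections $P[\cdots]$ are handled by estimating them in $L^2$ (where $P$ is bounded) rather than in $L^\infty$ (where it is not), always placing at least one of their arguments in $L^2$ first. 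The $\dot H^{1/2}$ bound for $\tK^{(4+)}$ is obtained by the same scheme after a fractional Leibniz (Kato--Ponce) distribution of $|D|^{1/2}$ onto a single factor.

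For part (ii), I exploit multilinearity of the cubic forms. If $T(u_1,u_2,u_3)$ is any trilinear summand of $\tG^{(3)}$ or $\tK^{(3)}$, telescoping gives
\[
T(W,\W,R) - T(\tW,\tilde\W,\tQ_\alpha) = T(W-\tW,\W,R) + T(\tW,\W-\tilde\W,R) + T(\tW,\tilde\W,R-\tQ_\alpha),
\]
with an analogous expansion for summands involving $R_\alpha$. The crucial observation is that, by \eqref{nft1}, each of the differences $W - \tW = 2P[\Re W\cdot W_\alpha]$ and $Q - \tQ = 2P[\Re W\cdot R]$ (and therefore $\W-\tilde\W$, $R-\tQ_\alpha$, $R_\alpha-\tQ_{\alpha\alpha}$ obtained by differentiation) is itself a \emph{quadratic} expression in the basic variables. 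A direct H\"older estimate using (a) and (b) then gives each difference an $L^2$ bound of size $\epsilon^2 t^{-1/2+C_*^2\epsilon^2}$ (with the analogous $\dot H^{1/2}$ bound). Placing the difference factor in $L^2$ and the remaining two factors in $L^\infty$ yields per term $\epsilon^2 t^{-1/2}\cdot(\epsilon t^{-1/2})^2 = \epsilon^4 t^{-3/2+3C_*^2\epsilon^2}$, as required.

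The main obstacle is organizational rather than analytic: one has to expand the normal form formulas \eqref{gk-tilde} and the auxiliary quantities $F, Y, J, b, a, M$ carefully enough to identify each multilinear term and verify the clean cubic vs.\ quartic-and-higher separation matching \eqref{gk-tilde1}, so that for the difference argument the cubic forms are truly polynomial in $(W,\W,R,R_\alpha)$. The only remaining analytic subtlety is the treatment of nested holomorphic projections, resolved uniformly by always estimating innermost products in $L^2$.
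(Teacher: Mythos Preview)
Your approach is essentially the same as the paper's: multilinear H\"older estimates combining one energy factor with three pointwise factors. The paper's proof is equally terse (``tedious but routine, left for the reader''), citing exactly the two inputs $\|(\W,R)\|_{\dH^4}\lesssim \epsilon t^{C_*^2\epsilon^2}$ and the pointwise bounds on $W,\W,\W_\alpha,R,R_\alpha$.

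Two small points of divergence are worth noting. First, the proposition's only hypothesis is \eqref{e}, not the bootstrap assumption \eqref{point-boot}; the pointwise input should therefore be \eqref{first-point}, which follows from \eqref{e} alone and gives $\epsilon t^{-1/2+C_*^2\epsilon^2}$ rather than $\epsilon t^{-1/2}$. This is harmless for the conclusion (the extra $t^{C_*^2\epsilon^2}$ is absorbed in the stated exponent) but you should source the bound correctly. Second, for \eqref{GK-diff} the paper invokes the ready-made pointwise difference estimate \eqref{point-comp}, placing $W-\tW$ and $R-\tQ_\alpha$ in $L^\infty$; you instead expand the normal form correction and place the difference in $L^2$. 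Both routes work; yours is self-contained, while the paper's avoids redoing the product estimates for the differences (in particular the slightly delicate identity $R-\tQ_\alpha = -R\W + 2P[(\Re W\cdot R)_\alpha]$, which is quadratic but not literally ``obtained by differentiation'' as you wrote).
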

\begin{proof}
The first estimate \eqref{GK4} follows from  by interpolation and H\"older's
inequality from the bounds
\[
\| (W_\alpha,R)\|_{\dH^4} \lesssim \epsilon t^{C_{*}^2 \epsilon^2},
\]
\[
|W|+|W_\alpha|+|W_{\alpha\alpha}| + |R|+|R_\alpha|  \lesssim \epsilon t^{-\frac12+C_{*}^2 \epsilon^2},
\]
which in turn are consequences of \eqref{e} and
\eqref{first-point}. For the second bound \eqref{GK-diff} we also need
to use once the estimate \eqref{point-comp}. The details are somewhat
tedious but routine, and are left for the reader.
\end{proof}
%%%%%%%%%%%%%%%%%%%%%%%%%%%%%%%%%%%%%%%%%%%%%%%%%%%%%%%%%%%%%%%%%%%%%%%%%%%%%%%%%%%%%%%%%%%%%%%%%%%%%%%%%%%%%%%%%%%%%%%%%%%%%%
%%%%%%%%%%%%%%%%%%%%%%%%%%%%%%%%%%%%%%%%%%%%%%%%%%%%%%%%%%%%%%%%%%%%%%%%%%%%%%%%%%%%%%%%%%%%%%%%%%%%%%%%%%%%%%%%%%%%%%%%%%%%%%
%%%%%%%%%%%%%%%%%%%%%%%%%%%%%%%%%%%%%%%%%%%%%%%%%%%%%%%%%%%%%%%%%%%%%%%%%%%%%%%%%%%%%%%%%%%%%%%%%%%%%%%%%%%%%%%%%%%%%%%%%%%%%%

\section{ Pointwise decay}
\label{s:section3}

\subsection{Testing by packets}

In order to establish the global pointwise decay estimates we use the
method of testing by wave packets, first introduced in the companion
paper \cite{IT} in the context of the one dimensional cubic NLS
equation.  The procedure we apply is very simple; we pick a ray
$\{\alpha = v t\}$ and establish decay along this ray by testing with a
wave packet moving along the ray. A wave packet is an approximate
solution to the linear system \eqref{ww2d-0}, with $O(1/t)$ errors.

To motivate the definition of this packet we recall some
useful facts. In view of the dispersion relation $\tau = \pm \sqrt{|\xi|}$,
a ray with velocity $v$ is associated with waves which have 
spatial frequency 
\[
\xi_v = - \frac{1}{4 v^2}.
\]
Secondly, for waves with initial data localized at the origin,
the spatial frequency corresponding with a position $(\alpha,t)$ is 
\[
\xi(\alpha,t) = -\frac{t^2}{4\alpha^2}.
\]
This is associated with the phase function
\[
\phi(t,\alpha) = \frac{t^2}{4\alpha}.
\]

Then our wave  packets will be combinations of functions of the 
\[
\uu(t,\alpha) = v^{-\frac32} \chi\left(\frac{\alpha - vt}{t^\frac12 v^\frac32}\right)e^{i\phi(t,\alpha)},
\]
where $\chi$ is a smooth compactly supported bump function with integral one
\begin{equation}\label{chi-int}
\int \chi (y)\, dy =1.
\end{equation}
Our packets are localized around the ray $\{\alpha = v t\}$ on the
scale $\delta \alpha = t^\frac12 v^\frac32$. This exact choice of
scale is determined by the phase function $\phi$. Precisely, the quadratic 
expansion of $\phi$ near $\alpha = vt$ reads
\[
\phi(t,\alpha) = \phi(t,vt) + (\alpha-vt) \phi_\alpha(t,vt) + O( t^{-1} v^{-3}  (\alpha-vt)^2),
\]
and our scale $\delta \alpha$ represents exactly the scale on which $\phi$ is well
approximated by its linearization. We further remark that there is a threshold
$v \approx t$ above which $\phi$ is essentially zero, and the above considerations
are no longer relevant. We confine our analysis to the region where $\phi$ is strongly
oscillatory, 
\begin{equation*}
%\label{vbound}
|v| \ll t^{\frac12}.
\end{equation*}
The power $\frac12$ here is somewhat arbitrary, any choice less than $1$ would do. 
Under this assumption, the function $\uu$ is strongly localized at frequency $\xi_v$.
For later use, we record here some ways to express this localization.

\begin{lemma}
\label{l:uu}
a) Let $\uu$ be defined as above. Then its Fourier transform and that
of $\partial_v \uu$ have the form
\begin{equation}\label{hatq}
\hat \uu (\xi)=  t^\frac12 \chi_1\left(\frac{\xi + (4v^2)^{-1}}{ t^{-\frac12} v^{-\frac32}} \right) e^{- i t \sqrt{|\xi|}},
\qquad \partial_v \hat{\uu}(\xi) =  t v^{-\frac32}  \chi_2\left(\frac{\xi + (4v^2)^{-1}}{ t^{-\frac12} v^{-\frac32}} \right) e^{- i t \sqrt{|\xi|}} ,
\end{equation}
 where $\chi_1$ and $\chi_2$ are Schwartz functions so that in addition, 
\begin{equation}\label{chi1-int}
\int \chi_1(\xi) \, d\xi = 1 + O(v^{\frac12}t^{-\frac12}).
\end{equation}

b) For $s \geq 0$, $\lambda_v = (4v^2)^{-1}$ and $P_{\lambda_v}$ the associated dyadic frequency projector  we have
\begin{equation} \label{dsuu}
P_{\lambda_v}(|D|^s   - (4v^2)^{-s}) \uu(\alpha,t)  =  (4v^2)^{-s} t^{-\frac12} v^{\frac12}
\chi_3\left(\frac{\alpha - vt}{t^\frac12 v^\frac32}\right)e^{i\phi(t,\alpha)},
\end{equation}
where $\chi_3$ is also a Schwartz function.
\end{lemma}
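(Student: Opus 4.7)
The plan is to verify both identities by direct computation, using the change of variables $\alpha = vt + t^{1/2}v^{3/2}y$ in position and its Fourier counterpart $\eta = (\xi+(4v^2)^{-1})t^{1/2}v^{3/2}$ in frequency, together with Taylor expansion of the phase $\phi$ around the packet center $\alpha = vt$. The essential structural fact is that under this rescaling the quadratic part of $\phi$ becomes a scale-invariant $y^2/4$, since $\tfrac12 \phi_{\alpha\alpha}(t,vt)\cdot (t^{1/2}v^{3/2})^2 = \tfrac14$ exactly.

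For part (a), I would substitute $\alpha = vt + t^{1/2}v^{3/2}y$ in the defining integral for $\hat\uu$; the prefactor $v^{-3/2}$ cancels against the Jacobian, leaving an overall $t^{1/2}$. Expanding $\phi(t,\alpha)-\alpha\xi$ around $\alpha = vt$, the linear term contributes $-\eta y$ (using $\phi_\alpha(t,vt)=\xi_v=-(4v^2)^{-1}$), the quadratic term contributes $y^2/4$, and the cubic remainder is $O(v^{1/2}t^{-1/2}|y|^3)$, which is negligible on the support of $\chi$ in the regime $|v|\ll t^{1/2}$. The constant-in-$y$ part of the phase differs from $-t\sqrt{|\xi|}$ (with the branch chosen to match the velocity $v$) by a smooth bounded function of $\eta$, which I absorb into $\chi_1$. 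Schwartz decay of $\chi_1$ in $\eta$ then follows from smoothness and compact support of $\chi$ by repeated integration by parts. For the normalization \eqref{chi1-int}, I would compute $\int \chi_1\, d\eta$ by Fubini on the leading representation $\chi_1(\eta)\approx\int\chi(y)e^{-i\eta y+iy^2/4}\,dy$; the inner $\eta$-integral is (up to the appropriate Fourier constant) a Dirac mass at $y=0$, collapsing the iterated integral to $\chi(0)\,e^{i\cdot 0}$, which matches $1$ via \eqref{chi-int} up to the $O(v^{1/2}t^{-1/2})$ error from the cubic remainder. Finally, $\partial_v \hat\uu$ is obtained by $v$-differentiation of the just-derived formula for $\hat\uu$: the dominant source of $v$-dependence is $\partial_v \eta$, of size $tv^{-3/2}$, giving the claimed prefactor, with subleading terms absorbed into $\chi_2$.

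For part (b), since $\hat\uu$ is concentrated in a window of width $t^{-1/2}|v|^{-3/2}$ around $\xi_v$ with $|\xi_v|=\lambda_v$, the projector $P_{\lambda_v}$ acts as the identity on $\hat\uu$ modulo Schwartz-decaying tails; its role is simply to make $|D|^s$ unambiguous. I would then factor the multiplier as $|\xi|^s-(4v^2)^{-s}=(4v^2)^{-s}(|4v^2\xi|^s-1)$ and, using $4v^2\xi = -1 + 4v^{1/2}t^{-1/2}\eta$ on the support of $\chi_1$, Taylor-expand $|x|^s$ at $x=-1$ to extract a factor of order $v^{1/2}t^{-1/2}\eta$ plus higher-order corrections. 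Multiplying by $\hat\uu = t^{1/2}\chi_1(\eta)e^{-it\sqrt{|\xi|}}$ produces $t^{1/2}$ times a new Schwartz function of $\eta$ times the same oscillatory factor; the correspondence established in part (a) (Schwartz profiles in $\eta$ go under inverse Fourier transform to Schwartz profiles in $y$ multiplied by $v^{-3/2}e^{i\phi}$) lets me transport this back to the spatial side. Collecting the overall factors $(4v^2)^{-s}$, $v^{1/2}$ and $t^{-1/2}$, and absorbing the numerical constants and residual Schwartz profile into $\chi_3$, yields the stated identity.

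The main obstacle throughout is the bookkeeping of scales, signs (especially the branch of $\pm t\sqrt{|\xi|}$ dictated by the sign of $v$) and Fourier-convention factors of $2\pi$, together with verifying that all Taylor remainders really do produce only Schwartz perturbations of the named profiles. The hypothesis $|v|\ll t^{1/2}$ is used precisely to keep the cubic phase corrections perturbative and to ensure that the rescaled frequency variable stays bounded away from the $x=0$ singularity of $|x|^s$.
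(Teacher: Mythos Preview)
Your overall approach is correct and matches what the paper indicates (the paper gives no detailed proof, only the hint that for \eqref{chi1-int} one should replace phases by their quadratic approximations). The rescaling $\alpha=vt+t^{1/2}v^{3/2}y$, the identification of the scale-invariant $y^2/4$ term, the treatment of $\partial_v\hat\uu$, and the argument for part (b) are all on target.

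There is, however, a genuine slip in your derivation of the normalization \eqref{chi1-int}. You write $\chi_1(\eta)\approx\int\chi(y)\,e^{-i\eta y+iy^2/4}\,dy$, integrate over $\eta$ to produce a Dirac mass at $y=0$, obtain $\chi(0)$, and then invoke \eqref{chi-int}. But \eqref{chi-int} asserts $\int\chi=1$, not $\chi(0)=1$; these are unrelated conditions. The mistake is that you dropped, at this step, precisely the phase correction you had earlier promised to ``absorb into $\chi_1$'': the difference between the $y$-independent phase and $\mp t\sqrt{|\xi|}$ is not merely a bounded function of $\eta$, but to leading order equals $\eta^2$, coming from the second-order Taylor expansion of $t\sqrt{|\xi|}$ about $\xi_v$. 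Keeping it, the full quadratic phase inside $\chi_1$ becomes $(\eta - y/2)^2$, and then
\[
\int \chi_1(\eta)\,d\eta \;\approx\; \int \chi(y)\Bigl(\int e^{i(\eta-y/2)^2}\,d\eta\Bigr)\,dy
\;=\;\Bigl(\int e^{is^2}\,ds\Bigr)\int\chi(y)\,dy,
\]
so that the inner $\eta$-integral is a Fresnel constant independent of $y$, not a delta function, and the normalization follows from $\int\chi=1$ as it should (the Fresnel constant equals $1$ in the appropriate Fourier convention). This is exactly the ``quadratic approximation of phases'' the paper singles out: you carried it out for $\phi$ but not for $t\sqrt{|\xi|}$.
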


The proof of the lemma is straightforward, and left for the
reader.  In order to obtain\eqref{chi1-int}, the key idea is to replace phases by their quadratic
approximations; see also the similar computation in \cite{IT}.

\bigskip

Applying the method of testing by wave packets for the water wave
equation is slightly more complicated than in the case of the cubic
NLS in \cite{IT} due the fact that we are dealing with a system, and
we need to choose the two components to match.  However, our system is
simple enough, so is suffices to first choose the $Q$ component and
then use the second of the two linear equations in \eqref{ww2d-0}  to match $W$,
\begin{equation*}
%\label{defpac}
(\ww,\qq) = ( -i v \partial_t \uu, v \uu).  
\end{equation*}
Then we have
\begin{equation}
\label{w-bold}
\ww = \frac12 \uu + \left( \frac{vt -\alpha}{2 \alpha} \chi\left(\frac{\alpha - vt}{t^\frac12 v^\frac32}\right)
 + \frac{i (vt+\alpha)}{2t^\frac32 v^\frac12}  \chi'\left(\frac{\alpha - vt}{t^\frac12 v^\frac32}\right)\right) v^{-\frac32} e^{i\phi(t,\alpha)}.
\end{equation}
The second term above is better by a $v^\frac12 t^{-\frac12}$ factor, so it will play a negligible role in 
most of our analysis. However, it is crucial in improving the error in the first linear equation
in  \eqref{ww2d-0}, which is given by 
\begin{equation}
\label{g-eq}
\ggg  := \partial_t \ww + \partial_\alpha \qq = v (\partial_\alpha - i \partial_t^2) \uu .
\end{equation}
 Indeed,  computing the  error in \eqref{g-eq} we obtain
\begin{equation}
\label{erori}
\begin{aligned}
(\partial_\alpha - i \partial_t^2) \uu &=\frac{e^{i\phi}}{v^{\frac{3}{2}}} \partial_{\alpha}\left[ \frac{(\alpha-vt) }{2\alpha}\chi-i \frac{ (\alpha +vt)^2}{4v^{\frac{3}{2}}t^{\frac{5}{2}}}\chi '\right]+ \frac{e^{i\phi}}{v^{\frac{3}{2}}} \left[ \frac{(\alpha-vt)}{2\alpha^2}\chi -i \frac{ (\alpha -vt)}{4v^{\frac{3}{2}}t^{\frac{5}{2}}}\chi'\right].
\end{aligned}
\end{equation}
The first term is the leading one, and, as expected, has size $t^{-1}$
times the size of $\ww$; further, it exhibits some additional
structure, manifested in the presence of $\partial_\alpha$, which we
will take advantage of later on.  The terms in the second bracket of
the RHS are better by another $t^{\frac12}$ factor, so no further structure
information is needed.

This shows that the choice of a such wave packet is a reasonable approximate
solution for the solution the the linear system.  Precisely, as in \cite{IT},
our test packets $(\ww,\qq)$ are good approximate solutions for the linear
system associated to our problem only on the dyadic time scale $\delta
t\leq t$.  

The outcome of testing the normal form solutions to the water wave
system with the wave packet $(\ww,\qq)$ is  the
scalar complex valued function $\gamma(t,v)$, defined in the region $\{|v| \leq t^\frac12\}$:
\begin{equation*}
%\label{def-gamma}
\gamma(t,v) = \langle (\tW,\tQ),(\ww,\qq)\rangle_{\dH_0},
\end{equation*}
which we will use as a good measure of the size of $(\tW,\tQ)$ along
our chosen ray.  Here it is important that we use the complex pairing
in the inner product. 

Now we have two
tasks. Firstly, we need to show that $\gamma$ is a good representation of the 
pointwise size of $(\tW,\tQ)$ and their derivatives:

\begin{proposition}
\label{p:diff}
Assume that \eqref{te} and \eqref{tvf} hold. Then in  $\Omega$ we have 
the following bounds for $\gamma$:
\begin{equation}\label{gamma}
\|(1+ v^{-2})^{5}\gamma \|_{L^2_v}  + \|v \partial_v\gamma \|_{L^2_v} 
+  \|v^\frac12 (1+ v^{-2})^{\frac52}\gamma \|_{L^\infty}  \lesssim 
\epsilon t^{C^2_{*}\epsilon^2},
\end{equation}
as well as the approximation bounds for $(\tW,\tQ)$ and their derivatives:
\begin{equation} \label{pactest}
\begin{split}
&(|D|^s \tW,|D|^{s+\frac12} \tQ) (t,vt) =  \ |\xi_v|^{s}
t^{-\frac12} e^{i\phi(t,vt)} \gamma(t,v) ( 1, \sgn{v}) + \err_s, 
\\  
& (\hat \tW,|\xi|^{\frac12}\hat \tQ)(t,\xi) =   
\sum_{\xi = -(4v^2)^{-1}}  |v|^{-\frac32} (e^{i t \sgn{v}  |\xi|^\frac12}  \gamma(t,v)(1,\sgn{v}) + \hat{\err}),
\end{split}
\end{equation}
where
\begin{equation}\label{pacerr}
\begin{split}
&\|(1+v^{-2})^{2-s} \err_{s+\frac12}\|_{L^2_v}  + \|(1+v^{-2})^{2-s} \err_s\|_{L^\infty} \lesssim  
\epsilon t^{-\frac59+C_{*}^2\epsilon^2}, \qquad 0 \leq s \leq 2,
\\
&\|v^{-1} (1+v^{-2})^2 \hat \err\|_{L^2_v}  + \| (1+v^{-2})^2 \hat \err\|_{L^\infty} \lesssim  
\epsilon t^{-\frac{1}{18}+C^2_{*}\epsilon^2}.
\end{split}
\end{equation}
\end{proposition}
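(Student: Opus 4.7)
The plan is to interpret $\gamma(t,v)$ as a phase-space coherent-state localization of $(\tW,|D|^{1/2}\tQ)$ at position $\alpha=vt$ and spatial frequency $\xi_v=-(4v^2)^{-1}$, and to prove that this localization is faithful up to errors better than the size of the localized object. The bounds \eqref{gamma} reflect the energy and vector-field content of $(\tW,\tQ)$, while \eqref{pactest}--\eqref{pacerr} assert that $\gamma$ determines the pointwise and Fourier values of $(\tW,\tQ)$ along the ray with slope $v$.

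For \eqref{gamma} I would apply Plancherel to the definition of $\gamma$. By Lemma~\ref{l:uu}(a), the Fourier transforms of $\ww$ and of $|D|^{1/2}\qq$ are concentrated in a box of size $\delta\xi=t^{-1/2}|v|^{-3/2}$ around $\xi_v$ with height $\sim t^{1/2}$, so the $\dH_0$-pairing produces, up to a Schwartz tail, $t^{1/2}$ times a local $\xi$-average of $\hat\tW+\sgn{v}\,|\xi|^{1/2}\hat\tQ$ near $\xi_v$. After the change of variables $v\mapsto\xi_v$, whose Jacobian is $\sim |v|^{-3}$, the weighted $L^2_v$-norm of $(1+v^{-2})^{5}\gamma$ becomes an integral dominated by $\|(\tW,\tQ)\|_{\dH_5}^2$, which is controlled by \eqref{te}. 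For the $v\partial_v\gamma$ bound, the formula for $\partial_v\hat\uu$ in Lemma~\ref{l:uu}(a) shows that $v\partial_v$ on the packet is, modulo lower-order terms, dual to the scaling vector field acting on $(\tW,\tQ)$, so the bound follows from \eqref{tvf}. The $L^\infty_v$ bound with weight $|v|^{1/2}(1+v^{-2})^{5/2}$ is then obtained by the standard Gagliardo--Nirenberg interpolation between the two $L^2_v$ bounds.

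For the physical-space part of \eqref{pactest} at $s=0$, I expand
\[
\gamma(t,v)=\int \tW(\alpha)\,\overline{\ww(\alpha)}\,d\alpha+(\dot H^{1/2}\text{-pairing with }\tQ),
\]
and use \eqref{w-bold} together with \eqref{dsuu} applied to $|D|^{1/2}\qq$ to identify the leading contribution as the average of $(\tW+\sgn{v}\,|D|^{1/2}\tQ)$ against $\tfrac12 v^{-3/2}\chi\bigl((\alpha-vt)/(t^{1/2}|v|^{3/2})\bigr)e^{-i\phi(t,\alpha)}$, the remainder lying in $\err_0$. After multiplying by $t^{1/2}e^{i\phi(t,vt)}$ and linearizing the phase around $\alpha=vt$, what remains is a local average with integral-one bump (by \eqref{chi-int}) on scale $\delta\alpha=t^{1/2}|v|^{3/2}$. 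The discrepancy between this average and the pointwise value at $\alpha=vt$ splits into a quadratic-phase-remainder term of size $(\delta\alpha)^2 t^{-1}|v|^{-3}\sim 1$ times a derivative and a Taylor-remainder term controlled by $(\delta\alpha)^k\|\partial_\alpha^k \tW\|_{L^\infty}$ for $k\leq 5$ via \eqref{te} and Sobolev embedding. The general $s\in[0,2]$ case reduces to $s=0$ by using \eqref{dsuu} to replace $|D|^s$ by $(4v^2)^{-s}$ on the packet side, modulo a Schwartz tail. The frequency-side formula is obtained by the same argument applied to Plancherel of $\gamma$, using Lemma~\ref{l:uu}(a) directly.

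The main obstacle is the precise bookkeeping required to extract the sharp exponents $t^{-5/9}$ and $t^{-1/18}$ in \eqref{pacerr}. In $\Omega$ we have $|v|\in[t^{-1/9},t^{1/9}]$, so $\delta\alpha\in[t^{1/3},t^{2/3}]$. The physical-side error splits into a spatial Taylor tail bounded by $(\delta\alpha)^k\|\partial_\alpha^k\tW\|_{L^\infty}$, $k\leq 5$, and a frequency-localization tail controlled through \eqref{tvf}; optimizing in $k$ and in $|v|$ over $\Omega$ yields $t^{-5/9}$, while the analogous calculation on the Fourier side yields $t^{-1/18}$. The slow-growth factor $t^{C_*^2\epsilon^2}$ is negligible and can be absorbed, completing the proposal.
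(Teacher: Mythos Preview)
Your outline has the right picture of $\gamma$ as a coherent-state localization, but the mechanism you propose for the error bounds \eqref{pacerr} has a genuine gap.

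The problem is your Taylor-remainder step: you propose to control the discrepancy between the pointwise value $\tW(vt)$ and the packet average by $(\delta\alpha)^k\|\partial_\alpha^k\tW\|_{L^\infty}$ with $\delta\alpha=t^{1/2}|v|^{3/2}$. This cannot work, because $\tW$ is oscillatory at frequency $\xi_v=-(4v^2)^{-1}$ in the region $\alpha\approx vt$; differentiating $\tW$ does not gain decay, it gains a factor of $|\xi_v|$. Concretely, $(\delta\alpha)^k\|\partial_\alpha^k\tW\|_{L^\infty}$ is of order $(t^{1/2}v^{3/2})^k\epsilon$, which for $v\approx 1$ is $t^{k/2}\epsilon$ and blows up. The quadratic-phase remainder you flag as ``$\sim 1$ times a derivative'' is the same issue in disguise: the phase error is $O(1)$ relative to the leading term, so it cannot be absorbed without further structure.

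What the paper does instead is to first \emph{dephase}: set $u=e^{-i\phi}y$ with $y=\frac12(\tW+\sgn v\,|D|^{1/2}\tQ)$ restricted to the hyperbolic region. Then $\partial_\alpha u=e^{-i\phi}Ly$ with $L=\partial_\alpha+it^2/(4\alpha^2)$, and the vector-field bound \eqref{tvf} (after an elliptic/hyperbolic splitting) yields $\|\chi_{\alpha\approx v_0t}Ly\|_{L^2_\alpha}\lesssim (v_0t)^{-1}$. The error $y(t,vt)-t^{-1/2}e^{i\phi}\langle y,\uu\rangle$ is then bounded purely in terms of $\|\partial_\alpha u\|_{L^2}$, by the elementary H\"older/averaging argument you had in mind, but applied to $u$ rather than to $\tW$. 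This is the missing idea: \eqref{tvf} is not merely a ``frequency-localization tail'' correction, it is the entire source of smallness for the error.

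Two further points. First, the exponent $t^{-5/9}$ does not come from ``optimizing in $k$'': it arises from the \emph{elliptic} component $(w_{\mathrm{ell}},r_{\mathrm{ell}})$, where interpolation between $\|\cdot\|_{H^5}\lesssim 1$ and $\|t|D|^{1/2}\cdot\|_{L^2}\lesssim 1$ gives $\||D|^s\cdot\|_{L^\infty}\lesssim t^{-5/9-2(2-s)/9}$. The hyperbolic error from the $Ly$ argument is actually better ($t^{-3/4}$ in $L^\infty$). Second, your $v\partial_v\gamma$ argument via duality to the scaling field is correct in spirit, but in practice one writes $\partial_v\uu$ in physical space, integrates by parts onto $\partial_\alpha u$, and uses the same $Ly$ bound.
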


Secondly, we need to show that $\gamma$ stays bounded,  which we do by 
establishing a differential equation for it:

\begin{proposition}
\label{p:ode}
Assume that \eqref{e}, \eqref{first-point}, \eqref{te}, \eqref{tvf} and \eqref{point-comp} hold.
Then within the set $\Omega$ the function $\gamma$ solves an asymptotic ordinary differential equation of the form 
\begin{equation}\label{gamma-ode}
\dot\gamma = \frac{i}{2t (2v)^5}   \gamma |\gamma|^2 + \sigma, 
\end{equation}
where $\sigma$ satisfies the $L^2$ and $L^\infty$ bounds
\begin{equation}\label{sigma}
 \| (1+v^{-2})^{2} \sigma\|_{L^2}+ \| (1+v^{-2})^{2} \sigma\|_{L^\infty}  \lesssim \epsilon
  t^{-\frac{19}{18}+C^2_{*}\epsilon^2}.
\end{equation}
\end{proposition}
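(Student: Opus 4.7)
The plan is to differentiate $\gamma$ in time, use the fact that $(\ww,\qq)$ solves the linearized equation \eqref{ww2d-0} up to the packet error $\ggg$ of \eqref{g-eq}, and reduce the resulting expression to an explicit resonant cubic term plus controllable corrections.

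I start with
\[
\dot\gamma = \langle (\tW_t, \tQ_t),(\ww,\qq)\rangle_{\dH_0} + \langle (\tW,\tQ),(\ww_t,\qq_t)\rangle_{\dH_0},
\]
substitute the cubic system \eqref{cubic} and the identities $\qq_t = i\ww$, $\ww_t = -\qq_\alpha + \ggg$, and observe that because the complex $\dH_0$ pairing is arranged so that the linearization is skew-adjoint, the quadratic pieces cancel, leaving
\[
\dot\gamma = \langle (\tG,\tK),(\ww,\qq)\rangle_{\dH_0} + \langle (\tW,\tQ),(\ggg,0)\rangle_{\dH_0} =: I + II.
\]
For $II$, the leading piece of $\ggg$ from \eqref{erori} has the form $v^{-1/2} e^{i\phi}\partial_\alpha(\ldots)$; integrating by parts transfers the derivative onto $\tW$, where it is absorbed by the vector field bound \eqref{tvf} together with \eqref{first-point-tilde}. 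The secondary bracket in \eqref{erori} is already smaller by an extra $t^{-1/2}$, so no further structural input is needed. Both halves of the weighted bound required in \eqref{sigma} follow.

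For $I$, I first replace $(\tG,\tK)$ by $(\tG^{(3)}(\tW,\tQ_\alpha),\tK^{(3)}(\tW,\tQ_\alpha))$ using \eqref{GK4} and \eqref{GK-diff}; after pairing with the unit-sized wave packet these perturbations contribute only $O(\epsilon^4 t^{-3/2+3C_*^2\epsilon^2})$ errors, well inside the $t^{-19/18}$ budget. On the ray $\alpha = vt$ the representation \eqref{pactest} gives
\[
\tW \approx t^{-\frac12} e^{i\phi} \gamma, \qquad \tQ_\alpha \approx -\tfrac{i}{2v}\, t^{-\frac12} e^{i\phi} \gamma,
\]
so each cubic monomial in $(\tG^{(3)},\tK^{(3)})$ becomes an oscillatory function with phase of the form $e^{i(\pm\phi\pm\phi\pm\phi)}$ and amplitude $\sim t^{-3/2}|\gamma|^3$. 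When tested against $(\ww,\qq)$ carrying phase $e^{i\phi}$, only the resonant triples (two $+$'s and one $-$) produce diagonal contributions; the non-resonant monomials oscillate at rate $e^{\pm 2i\phi}$ with $\partial_\alpha\phi(t,vt) = -(4v^2)^{-1}$, and integration by parts in $\alpha$ provides the extra decay required to absorb them into $\sigma$. Collecting the resonant contributions, applying the projectors $P$ that eliminate further combinations by negative-frequency support, and tracking powers of $v$ from the $v^{-1}$ scaling of $\tQ_\alpha$ and from the two components of $(\ww,\qq)$, yields the explicit coefficient $\frac{i}{2t(2v)^5}$ in front of $\gamma|\gamma|^2$.

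The main obstacle is the combinatorial bookkeeping in this last step. The formulas \eqref{gk-tilde1} contain many monomials written in the original variables $(W,\W,R)$ with projectors $P,\bar P$ peppered throughout; these must be re-expressed in the diagonal variables $(\tW,\tQ_\alpha)$ via \eqref{nft1} to leading order, the surviving resonant triples enumerated, and their numerical coefficients summed to exactly $\frac{i}{2(2v)^5}$ — an error anywhere would corrupt the modified-scattering phase in \eqref{asymptotics}. Two secondary technical points: the weighted $L^2_v$ half of \eqref{sigma} uses the vector-field energy \eqref{tvf} via the $\partial_v \gamma$ bound of \eqref{gamma}, and the $L^\infty_v$ half relies on \eqref{first-point-tilde} together with the restriction to $\Omega$, where $\omega(\alpha,t) \approx t^{-1/2}$ and so no extra spatial decay needs to be tracked.
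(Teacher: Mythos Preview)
Your proposal is essentially correct and follows the same architecture as the paper: differentiate $\gamma$, split off the packet-error contribution, peel away quartic-and-higher terms via \eqref{GK4}--\eqref{GK-diff}, substitute the asymptotic expansion \eqref{pactest} into the surviving cubic terms, and separate resonant from non-resonant phases by integration by parts.

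Two places where the paper's execution differs from your sketch are worth noting. For the packet error (your term $II$), the paper first replaces $\tW$ by $t^{-1/2}e^{i\phi}\gamma$ using \eqref{pactest}--\eqref{pacerr} and only then integrates by parts, so the derivative lands on $\gamma$ and is controlled by the $\|v\partial_v\gamma\|_{L^2_v}$ bound in \eqref{gamma}; your direct appeal to \eqref{tvf} points in the right direction, but what one actually needs after integration by parts is the localized bound on $L\tW = (\partial_\alpha + it^2/4\alpha^2)\tW$, and going through $\gamma$ packages this cleanly. For the resonant combinatorics, the paper isolates a subclass of \emph{null} trilinear terms---expressions with exactly one conjugate whose principal symbol nevertheless vanishes, such as $\tW_\alpha\bar\tQ_\alpha - \bar\tW_\alpha\tQ_\alpha$, $(|\tQ_\alpha|^2)_\alpha$, and $\tQ_\alpha\tQ_{\alpha\alpha} + i\tW_\alpha^2$---and observes that these give zero identically upon substitution. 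What remains is the short explicit expression $\tG^{(3)}_r = P[(R\W)_\alpha\bar W + \W R\bar\W]$ (and $\tK^{(3)}_r = 0$), from which the coefficient $i/(2t(2v)^5)$ drops out directly; this is where your ``combinatorial bookkeeping'' becomes transparent rather than laborious. One small correction: the replacement of $(W,R)$ by $(\tW,\tQ_\alpha)$ inside the cubic terms is done simply by \eqref{GK-diff}, not by inverting the normal form \eqref{nft1}.
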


We now use the two propositions to conclude the proof of \eqref{point-t}. 
By virtue of \eqref{pactest} and \eqref{pacerr}, in order to prove \eqref{point-t}
it suffices to establish its analogue for $\gamma$, namely 
\begin{equation}\label{need}
|\gamma(t,v)| \lesssim \epsilon (1+v^{-2})^{-2} \qquad \text{  in  $\Omega$}.
\end{equation}
On the other hand,  from \eqref{first-point-tilde} we directly obtain
\begin{equation}\label{have}
|\gamma(t,v)| \lesssim \epsilon (1+v^{-2})^{-2} \omega(v,t) t^{C_*^2 \epsilon} \text{ in $\Omega$}.
\end{equation}

Our goal now is to use the ode \eqref{gamma-ode} in order to transition from 
\eqref{have} to \eqref{need} along rays $\alpha = vt$. We consider three cases for $v$:

(i)  Suppose first that
$v \approx 1$, i.e., $|\alpha| \approx t$. Then we initially have 
\[
|\gamma(t)| \lesssim \epsilon, \qquad t \approx 1.
 \]
Integrating \eqref{gamma-ode} we conclude that 
\[
|\gamma(t)| \lesssim \epsilon, \qquad t \geq 1,
\]
and then \eqref{need} follows.

(ii) Assume now that $v \ll 1$, i.e., $|\alpha| \ll t$. Then, as $t$ increases, the ray $\alpha = vt$
enters $\Omega$ at some point $t_0$ with $v \approx t_0^{-\frac19}$. Then by 
 \eqref{have} we obtain
\[
|\gamma(t_0,v)| \lesssim \epsilon v^4 v^{\frac12} t^{C_*^2 \epsilon}  \lesssim \epsilon v^2.
\] 
We use this to initialize $\gamma$. For larger $t$ we use \eqref{gamma-ode}
to conclude that 
\[
|\gamma(t)| \lesssim \epsilon v^4 + \int_{t_0}^\infty \epsilon v^4 
s^{-\frac{19}{18}+C^2_{*}\epsilon^2} ds \approx 
 \epsilon v^4 (1+ t_0^{-\frac1{18}+C^2_{*}\epsilon^2 }) \lesssim \epsilon v^4 , \qquad t > t_0.
\]
Then  \eqref{need} follows.

(iii) Finally, consider the case $v \gg 1$, i.e., $|\alpha| \gg t$.
Again, as $t$ increases, the ray $\alpha = vt$ enters $\Omega$ at some point $t_0$ 
 $v \approx t_0^{\frac19}$, therefore by \eqref{have} we obtain
\[
|\gamma(t_0,v)| \lesssim \epsilon  v^{-\frac12} t_0^{C_*^2 \epsilon}  \lesssim \epsilon. 
\] 
We use this to initialize $\gamma$. For larger $t$ we use \eqref{gamma-ode}
to conclude that 
\[
|\gamma(t)| \lesssim \epsilon  + \int_{t_0}^\infty \epsilon s^{-\frac{19}{18}+C^2_{*}\epsilon^2}  \approx 
 \epsilon  (1+ t_0^{-\frac1{18}+C^2_{*}\epsilon^2 }) \lesssim \epsilon ,  \qquad t > t_0.
\]
Then  \eqref{need} again follows.

The remainder of the paper is devoted to the proof of the two propositions above.

\subsection{Approximation errors.}

Here we prove Proposition~\ref{p:diff}, using the estimates \eqref{te}
and \eqref{tvf}. In order to symmetrize the problem it is useful to introduce the normalized
variables $(w,r) = (\tW, D^\frac12 \tQ)$, which satisfy the bounds
\begin{equation*}
\| (w,r)\|_{H^5} \leq \epsilon t^{C^2_{*} \epsilon^2}, \qquad
\| (2 \alpha \partial_\alpha w -  i t  |D|^\frac12 r ,  
2\alpha \partial_\alpha  r - it |D|^\frac12 w )\|_{L^2_{\alpha}} \lesssim \epsilon t^{C^2_{*} \epsilon^2}.
\end{equation*}
Then we rewrite $\gamma$ in terms of these variables
as
\[
\gamma = \int w \bar \ww + r D^\frac12 \bar \qq \, d\alpha .
\]

For the purpose of proving \eqref{pacerr} we can simplify somewhat the
expression of $\gamma$. The lower order terms in $\ww$ in
\eqref{w-bold} are better by a factor of $v^\frac12 t^{-\frac12}$,
therefore we can readily replace $\ww$ by $\frac12 \uu$, modulo errors
which satisfy \eqref{pacerr}. Also, in view of Lemma~\ref{l:uu}, we
can also substitute $D^\frac12 \bar \qq$ by $ \frac{t}{2|\alpha|} \bar
\qq$ and further by $\pm \frac12\uu $, with errors that are also
$v^\frac12 t^{-\frac12}$ better. In view of these considerations, it suffices to
prove Proposition~\ref{p:diff} with $\gamma$ redefined as
\begin{equation}
\label{rewrite-gamma}
\gamma(t,v) = \frac{1}{2} \int (w\pm r) \bar{\textbf{q}} \, d\alpha.
\end{equation}
Then Proposition~\ref{p:diff}  is a consequence of the following Lemma:

\begin{lemma}
Let $\gamma$ be defined as in \eqref{rewrite-gamma} in the region 
$\Omega$, where $(w,r)$ are holomorphic functions
which satisfy
\begin{equation}
\label{H}
\| (w,r)\|_{H^5} \leq 1, \qquad \| (2 \alpha \partial_\alpha w -  i t  |D|^\frac12 r ,  2\alpha \partial_\alpha  r - it |D|^\frac12 w )\|_{L^2_{\alpha}} \lesssim 1.
\end{equation}
Then $\gamma$ satisfies the bounds
\begin{equation}
\label{point-gamma}
 \|(1+v^{-2})^5 \gamma\|_{L^2_v}  + \| v \partial_v \gamma \|_{L^2_v} \lesssim  1, 
\qquad |\gamma| \lesssim v^{-\frac12}(1+v^{-2})^{-\frac52}.
\end{equation}
Moreover, the following error bounds for $\gamma$ also hold:
\begin{equation}\label{pactest1}
\begin{split}
&|D|^s(w,r)(t,vt) =   |\xi_v|^s t^{-\frac12}e^{i\phi(t,vt)} \gamma(t,v)(1,\sgn{v}) + \err_s, \\
&  (\hat w,\hat r)(t,\xi) =  \ \sum_{\xi = - (4v^2)^{-1}}  |v|^{-\frac32}(  
e^{i t \sgn{v}  |\xi_v|^\frac12}  \gamma(t,v)(1,\sgn{v}) + \hat{\err}),
\end{split}
\end{equation}
where
\begin{equation}\label{pacerr1}
\begin{split}
&\|(1+v^{-2})^{2-s} \err_{s+\frac12} \|_{L^2_v} \lesssim  t^{-\frac59}, \qquad  \|(1+v^{-2})^{2-s} \err_s\|_{L^\infty} \lesssim  t^{-\frac59} , 
\qquad 0 \leq s \leq 2, \\
&\|v^{-1} (1+v^{-2})^{2} \hat\err \|_{L^2_v} \lesssim  t^{-\frac1{18}}, \ \, \qquad  \|(1+v^{-2})^{2} \hat\err\|_{L^\infty} \lesssim  t^{-\frac1{18}} , 
\quad \qquad 0 \leq s \leq 2. 
\end{split}
\end{equation}
\end{lemma}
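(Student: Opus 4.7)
The underlying principle is that $\gamma(t,v)$ is a smoothed Fourier evaluation of $w \pm r$ at the characteristic frequency $\xi_v = -(4v^2)^{-1}$. Applying Plancherel together with part (a) of Lemma~\ref{l:uu}, I would rewrite $\gamma$ as
\[
\gamma(t,v) = \frac{|v|^{-1/2}}{2}\int \widehat{(w\pm r)}(\xi_v + \epsilon\eta)\, \chi_1(\eta)\, e^{it\sqrt{|\xi_v+\epsilon\eta|}}\, d\eta, \qquad \epsilon = t^{-1/2}|v|^{-3/2},
\]
i.e., as a $\chi_1$-average of $\widehat{(w\pm r)}\,e^{it\sqrt{|\xi|}}$ on scale $\epsilon$ around $\xi_v$. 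This representation is the engine for all of the subsequent estimates: the leading contribution is $\gamma \approx \tfrac12 |v|^{-1/2}\widehat{(w\pm r)}(\xi_v)\,e^{it\sqrt{|\xi_v|}}$, and the rest of the argument consists of controlling norms and quantifying the remainder.

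For the $L^2$ bounds in \eqref{point-gamma}, the change of variables $\xi = \xi_v$ has Jacobian $\sim |\xi|^{-3/2}d\xi$, and converts the weight $|v|^{-1}(1+v^{-2})^{10}$ into $|\xi|^{-1}\langle\xi\rangle^{10}$; in the region $\Omega$, where $|v|$ is bounded in a controlled range, this is dominated by $\langle\xi\rangle^{10}$ and reduces the estimate to the $H^5$ bound on $w\pm r$. For $\|v\partial_v\gamma\|_{L^2_v}$, I would differentiate under the integral and use the formula for $\partial_v\hat{\uu}$ from Lemma~\ref{l:uu}; equivalently, on the physical side, the $v$-derivative of $\qq = v\uu$ can be absorbed onto $(w,r)$ via integration by parts in $\alpha$, producing the scaling/Galilean combination $2\alpha\partial_\alpha(w\pm r) - it|D|^{1/2}(w\pm r)$, which is directly bounded by the vector field hypothesis \eqref{H}. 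Given both weighted $L^2$ bounds, the weighted $L^\infty$ estimate follows by a one-dimensional Sobolev embedding in $v$.

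For the approximation formula \eqref{pactest1} on the physical side, I would perform the change of variables $y = (\alpha - vt)/\delta\alpha$ with $\delta\alpha = t^{1/2}|v|^{3/2}$ in the integral defining $\gamma$, and Taylor-expand the phase as
\[
\phi(t, vt + \delta\alpha y) = \phi(t,vt) + \xi_v\, \delta\alpha\, y + O(y^2)
\]
on the support of $\chi$. The leading-order contribution reproduces $(w\pm r)(t,vt)e^{-i\xi_v \cdot vt}$, which after multiplication by $e^{i\phi(t,vt)}$ and $|\xi_v|^s$ gives the main term $|\xi_v|^s t^{-1/2} e^{i\phi(t,vt)}\gamma(t,v)(1,\sgn v)$; the vector $(1,\sgn v)$ reflects the fact that near $\xi = \xi_v$ a holomorphic traveling wave satisfies $r = \sgn v \cdot w$ at leading order (the sign selecting the correct branch of the square root in $|\xi|^{1/2}$). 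The Fourier-side identity is proved analogously, starting from the bump description of $\hat\qq$ rather than $\qq$.

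The principal technical obstacle will be quantifying the errors in order to obtain the prescribed rates $t^{-5/9}$ and $t^{-1/18}$ in \eqref{pacerr1}. These errors have two origins: the variation of $(w\pm r)$ across the packet width $\delta\alpha$, controlled via Sobolev embedding of $H^5$ into a H\"older class, and the quadratic phase remainder, which on the support of $\chi$ is of size $O(1)$ but whose effect on the integral can be improved by integration by parts exploiting the Schwartz decay of $\chi$ and $\chi_1$. Optimizing these estimates across $|v|\in [t^{-1/9}, t^{1/9}]$ in $\Omega$ yields the stated rates, with the slower Fourier-side rate $t^{-1/18}$ reflecting that the characteristic frequency $|\xi_v|\sim v^{-2}$ can be as small as $t^{-2/9}$ at the extreme $|v| = t^{1/9}$, which limits the amount of decay recoverable from the $H^5$ hypothesis.
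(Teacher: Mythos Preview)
Your proposal has a genuine gap in the error estimates \eqref{pacerr1}. You propose to control the variation of $(w\pm r)$ across the packet width $\delta\alpha = t^{1/2}|v|^{3/2}$ via Sobolev embedding of $H^5$ into a H\"older class. But the $H^5$ hypothesis only gives a H\"older constant of size $O(1)$, with no decay in $t$; since $\delta\alpha$ grows like $t^{1/2}$ (for $v\approx 1$), this cannot produce the $t^{-5/9}$ rate. The quadratic phase remainder is a red herring here: it is indeed $O(1)$ on the support of $\chi$, but integration by parts against it does not help, because the obstruction is the variation of the solution itself, not of the phase.

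The missing mechanism, which the paper exploits, is this. Set $y = (w\pm r)/2$ and $u = e^{-i\phi}y$, so that $\partial_\alpha u = e^{-i\phi}Ly$ with $L = \partial_\alpha + it^2/(4\alpha^2)$. Combining the two components of the vector field hypothesis \eqref{H} and localizing to the hyperbolic region $\{\alpha \approx v_0 t,\ \text{frequency}\approx v_0^{-2}\}$ yields $\|\chi_{\alpha\approx v_0 t}\,Ly\|_{L^2_\alpha} \lesssim (v_0 t)^{-1}$. It is this smallness of $\partial_\alpha u$ (not of $\partial_\alpha y$) that makes $u$ nearly constant across the packet and gives $\|\err\|_{L^\infty} \lesssim v_0^{3/4}t^{1/4}\|Ly\|_{L^2} \lesssim v_0^{-1/4}t^{-3/4}$, from which \eqref{pacerr1} follows after interpolation. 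To set this up one first needs an elliptic/hyperbolic decomposition: the operator in \eqref{H} is elliptic away from the matched region $\lambda \approx t^2/\alpha^2$, so the mismatched part of $(w,r)$ goes directly into the error; the same ellipticity, applied to the \emph{difference} of the two components of \eqref{H}, shows that $w - \sgn(v)\, r$ is negligibly small in the hyperbolic region, which is what justifies the vector $(1,\sgn v)$ in \eqref{pactest1}. Your sketch invokes the vector field combination only for $\|v\partial_v\gamma\|_{L^2_v}$; in fact it is the engine for the entire lemma, and the $H^5$ bound is used only to supply the weighted $L^2$ size of $y$, not its variation.
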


\begin{proof}
  We first note that our hypothesis \eqref{H} on $(w,r)$ is stable
  with respect to dyadic frequency localizations. At a fixed dyadic frequency $\lambda$,
the operator 
\[
(w,r) \rightarrow  (2 \alpha \partial_\alpha w -  i t  |D|^\frac12 r ,  2\alpha \partial_\alpha  r - it |D|^\frac12 w )
\]
is elliptic outside the region $\lambda \approx t^2/\alpha^2$.  Equivalently,
the spatial region $\{\alpha \approx vt\}$ is matched to the dyadic
frequency $\lambda_v = v^{-2}$.    Thus, using elliptic
estimates, we can decompose $(w,r)$ into a leading part and an elliptic component,
\[
(w,r) = (w_{ell},r_{ell}) + \sum_{v} \chi_{\alpha \approx vt} P_{\lambda_v}(w,r).
\]
We consider the two parts separately.

\bigskip

{\bf (i) The elliptic part:} For   $(w_{ell},r_{ell})$, the bound \eqref{H} translates into 
\[
\| (w_{ell},r_{ell})\|_{H^5} \leq 1, \qquad \|  \alpha \partial_\alpha (w_{ell},r_{ell}) \|_{L^2}
+\| t |D|^\frac12 (w_{ell},r_{ell})\|_{L^2} 
 \lesssim 1 .
\]
By interpolation and Bernstein's inequality this leads to the  bounds
\[
\| |D|^{s+\frac12}  (w_{ell},r_{ell})\|_{L^2_\alpha}+ 
\| |D|^{s}  (w_{ell},r_{ell})\|_{L^\infty} 
\lesssim t^{-\frac59-\frac29 (2-s)}, \qquad 0 \leq s \leq 2.
\]
One can also switch to an $L^2_v$ norm, using the fact
that the norms $L^2_v$ and $L^2_{\alpha}$ are related by
\[
\Vert f \Vert_{L^2_{\alpha}}=t^{\frac{1}{2}}\Vert f \Vert_{L^2_{v}}.
\]
On the Fourier side, we similarly obtain the pointwise bound
\[
 |(\hat w_{ell}, \hat r_{ell})| \lesssim (\xi^3 + t^\frac12 \xi^\frac34)^{-1}.
\]
These estimates allow us to place  $(w_{ell},r_{ell})$
into the error term in \eqref{pactest1}.  Further, in view of \eqref{hatq},
the contribution of $(w_{ell},r_{ell})$ to $\gamma$ has size $t^{-N}$,
and can be also placed in the error.

\bigskip 

{\bf (ii) The hyperbolic part: } Here spatial dyadic regions are diagonally matched
with dyadic frequency regions, and dyadic $l^2$ summability is inherited from the latter.
Hence it suffices to consider a fixed dyadic velocity range $\{v \approx v_0\}$, associated 
to the spatial region $\{|\alpha| \approx v_0t\}$,  and $(w,r)$ localized at frequency 
$\lambda_{v_0} \approx v_0^{-2}$.

 In order to fix signs, we first need to differentiate between the two symmetric cases
$v_0 > 0$ and $v_0 < 0$. Without any restriction in generality we take 
$v_0 > 0$. Then subtracting the two components in the second term
in \eqref{H} we obtain 
\[
\|(2\alpha \vert D\vert +t\vert D\vert^{\frac{1}{2}} )(w-r)\|_{L^2} \lesssim 1.
\]
The operator above is elliptic in $\{\alpha \approx v_0 t\}$, therefore 
we obtain
\[
\| \chi_{\alpha \approx v_0t} (w-r)\|_{L^2} \lesssim \frac{1}{t \lambda^{\frac12}},
\]
which is comparable to the estimates obtained in the elliptic case.
Thus, as there, we can bound $|D|^{s}(w-r)$ in $L^2$ and in $L^\infty$
and place it into the error term of \eqref{pactest1}.

Hence, we can freely replace $w$ and $r$ by $y = \frac{w+r}2$ in
\eqref{pactest1}.  We note that $\gamma$ is already expressed in terms
of $y$. To reduce the problem completely to an estimate for $y$ we
need one last step. Combining again the two components in the second term
in \eqref{H} we obtain 
\[
\| |D|^{\frac12} \chi_{\alpha \approx v_0 t} (4\alpha^2 \partial_{\alpha} + it^2)(w,r) \|_{L^2_{\alpha}} \lesssim t,
\]
which yields the same bound for $y$. Choosing  $\chi_{\alpha \approx vt}$ to be supported at 
spatial frequency $\ll v^{-2}$, we can cancel the $|D|^\frac12$ 
and conclude that 
\begin{equation}\label{z1}
\left\|  \chi_{\alpha \approx v_0 t} L y \right\|_{L^2_{\alpha}}
 \lesssim \frac{1}{v_0t}, \qquad L = \partial_{\alpha} +\frac{it^2}{4\alpha^2}.
\end{equation}
On the other hand, from the first relation in \eqref{H} we obtain
\begin{equation}\label{z2}
\| y \|_{L^2_\alpha} \lesssim (1+ v_0^{-2})^{-5}.
\end{equation}
From here on we will work only with the function $y$.

It is convenient to rewrite the bounds on $y$ in terms of the auxiliary function
$u:= e^{-i\phi} y$, which satisfies
$ \partial_{\alpha}u=e^{-i\phi}(\partial_{\alpha}+\frac{it^2}{4\alpha^2})y$.
Then for $u$ we have 
\begin{equation}\label{u1}
\left\|  \chi_{\alpha \approx v_0 t} \partial_\alpha u \right\|_{L^2_{\alpha}}
 \lesssim \frac{1}{v_0t}, \qquad \| u \|_{L^2_\alpha} \lesssim (1+ v_0^{-2})^{-5}.
\end{equation}
Combining these bounds we get by interpolation
\begin{equation*}
\label{ui}
\|  \chi_{\alpha \approx v_0 t} u \|_{L^\infty} \lesssim t^{-\frac12} v_0^{-\frac12}(1+ v_0^{-2})^{-\frac52},
\end{equation*}
which also is transferred back to $y$,
\begin{equation}\label{zi}
\|  \chi_{\alpha \approx v_0 t} y \|_{L^\infty} \lesssim t^{-\frac12} v_0^{-\frac12}(1+ v_0^{-2})^{-\frac52}.
\end{equation}

The bounds \eqref{z2} and \eqref{zi} lead directly to $L^2$ and $L^\infty$ bounds for $\gamma$,
\begin{equation}\label{game}
\|\gamma \|_{L^2_v(v \approx v_0)} \lesssim (1+ v_0^{-2})^{-5}, 
\qquad\|\gamma \|_{L^\infty(v \approx v_0)} \lesssim v_0^{-\frac12}(1+ v_0^{-2})^{-\frac52}.
\end{equation}
To estimate $\partial_v \gamma = \langle y, \partial_v \uu
\rangle_{L^2}$ we write $\partial_v \uu$ in the form
\[
\partial_v \uu = 
- v^{-\frac32} e^{i \phi} \left(  t \partial_\alpha \chi\left( \frac{\alpha-vt}{t^\frac12 v^{\frac32}}\right)
+ \frac32 \frac{\alpha-vt}{t^\frac12 v^\frac52} \chi'\left( \frac{\alpha-vt}{t^\frac12 v^{\frac32}}\right)
\right),
\]
and compute using integration by parts
\[
\partial_v \gamma = \int v^{-\frac32}  t \partial_\alpha u(t,\alpha) \chi\left( \frac{\alpha-vt}{t^\frac12 v^{\frac32}}\right) \, d\alpha -  \int v^{-\frac32}  u(t,\alpha)  \frac32 \frac{\alpha-vt}{t^\frac12 v^\frac52} \chi'\left( \frac{\alpha-vt}{t^\frac12 v^{\frac32}}\right) d\alpha .
\]
Now we can bound the two  integrals using \eqref{u1} to obtain
\[
\| \partial_v \gamma \|_{L^2_v(v \approx v_0)} \lesssim \frac{1}{v_0} ,
\]
which, together to \eqref{game}, concludes the proof of \eqref{gamma}.

%%%%%%%%%%%%%%%%%%%%%%%%%%%       %%%%%%%%%%%%%%%%%%%%%   

It remains to estimate the $L^2$ and $L^{\infty}$ norms of the error
in \eqref{pacerr}.  We begin with the physical space error bounds.  The idea
is to bound the difference $$\err= y(t,vt) - t^{-\frac12}
e^{i\phi(t,vt)} \langle y, \uu \rangle_{L^2}$$ in both $L^2_v$ and
$L^\infty$ in terms of $\|y\|_{L^2_\alpha}$ and $\|Ly\|_{L^2_\alpha}$.
Precisely, we claim that
\begin{equation}
\label{err-est}
\| \err\|_{L^\infty(v \approx v_0)} \lesssim v_0^\frac34 t^{\frac{1}{4}}\Vert
 \chi_{\alpha \approx v_0 t} Ly \Vert_{L^2_{\alpha}}, \qquad \| \err\|_{L^2_v(v \approx v_0)} \lesssim 
v_0^{\frac32} \Vert
 \chi_{\alpha \approx v_0 t} Ly \Vert_{L^2_{\alpha}}.
 \end{equation}
Restated  in terms of $u$ we have $e^{-i\phi} \err  = y(t,vt)
- t^{-\frac12}  \langle u, e^{-i\phi} \uu \rangle_{L^2}$. Hence, using \eqref{chi-int},  we can write 
\begin{equation}
\label{diff}
\begin{aligned}
  e^{- i\phi} \err   &= \int  
(u(t,vt) -u(t,(v-z)t) )\chi \left(t^{\frac{1}{2}}v^{-\frac32} z \right) v^{-\frac32} t^{\frac{1}{2}}  \, dz.
\end{aligned}
\end{equation}
By H\"older's inequality
\begin{equation*}
\vert u(t,vt) -u(t,(v-z)t) \vert \leq 
\vert z \vert ^{\frac{1}{2}}  \Vert \partial_{v} u(t,vt)\Vert_{L^2_v}
=\vert z \vert ^{\frac{1}{2}} t^\frac12 \Vert \partial_{\alpha} u\Vert_{L^2_\alpha}.
\end{equation*}
As in \eqref{diff} $z$ is restricted to $|z| \lesssim v_0^\frac32 t^{-\frac12}$;
by \eqref{u1} we obtain 
\[
|\err| \lesssim v_0^{-\frac14} t^{-\frac34} .
\]
Hence the pointwise part of \eqref{err-est} follows.

To prove the $L^{2}_v$ bound in \eqref{err-est}, we estimate the RHS of
\eqref{diff} in terms of  $\partial_v u$,
\begin{equation*}
\begin{aligned}
\vert \err  \vert  &\lesssim \int_{0}^1 \int \vert z \vert \vert \partial_{v}u (t, (v-hz)t)\vert  \chi \left(t^{\frac{1}{2}}v^{-\frac32}z \right)v^{-\frac32} t^{\frac{1}{2}}  \, dz dh .
\end{aligned}
\end{equation*}
Thus, we can now evaluate the $L^2_v$ of the LHS of \eqref{diff} as follows,
\begin{equation*}
\begin{aligned}
\Vert \err  \Vert_{L^2_v(v \approx v_0)}  & \ \lesssim  \Vert \partial_{v} u(t, vt)\Vert_{L^2_v(v \approx v_0)}\int \vert z \vert  \chi \left(t^{\frac{1}{2}}v^{-\frac32} v_1 \right)v^{-\frac32} t^{\frac{1}{2}}  \, dz \\ & \ \approx t^{-\frac{1}{2}} v_0^\frac32  \Vert \partial_{v}u(t, vt)\Vert_{L^2_v(v \approx v_0)}
= v_0^{\frac32} \Vert \chi_{\alpha\approx v_0 t} \partial_{\alpha}u \Vert_{L^2_{\alpha}}.
\end{aligned}
\end{equation*}
This completes the proof of \eqref{err-est}. In turn, \eqref{err-est} together with \eqref{z1} applies directly to the case $s = 0$ of \eqref{pacerr} to give
\[
 \| \err_0\|_{L^\infty(v \approx v_0)} \lesssim v_0^{-\frac14} t^{-\frac{3}{4}},
\qquad \| \err_0 \|_{L^2_v(v \approx v_0)} \lesssim 
v_0^{\frac12} t^{-1} ,
 \]
which suffices for \eqref{pacerr1}.

In order to consider also the case  $s > 0$, we write
\[
\begin{split}
\err_s = & \  |D|^s y(t,vt) - (4v^2)^{-s} t^{-\frac12} e^{i\phi} \langle y, \uu\rangle_{L^2_\alpha} 
\\ = & \  |D|^s y(t,vt) -  
t^{-\frac12} e^{i\phi} \langle |D|^s y, \uu\rangle_{L^2_\alpha} 
-  t^{-\frac12} e^{i\phi} \langle y, (|D|^s -  (4v^2)^{-s}) \uu\rangle_{L^2_\alpha} 
\\ :=& \  \err_s^1 + \err_s^2.
\end{split}
\]
For $\err_s^1$ we apply \eqref{err-est} with $y$ replaced by $|D|^s y$, to obtain 
the same bound as before but with an added $v_0^{-2s}$ factor,
\[
 \|  \err_s^1\|_{L^\infty(v \approx v_0)} \lesssim v_0^{-\frac14} (1+v^{-2})^{-s} t^{-\frac{3}{4}},
\quad \|  \err_{s+\frac12}^1 \|_{L^2_v(v \approx v_0)} \lesssim 
v_0^{-\frac12} (1+v^{-2})^{-s}  t^{-1} ,
 \]
which is unfavorable  if $v_0 < 1$. But then we can still interpolate with the 
$t^{-\frac12}$ bound  from \eqref{zi} and \eqref{game} to remove the negative power of $v_0$.

For $\err_s^2$ instead we use the cancellation in \eqref{dsuu} to conclude that 
$P_\lambda (|D|^s -  (4v^2)^{-s}) \uu$ is a bump function comparable to $(4v^2)^{-s}
v^{\frac12} t^{-\frac12} \uu$. Hence we obtain a direct bound as in \eqref{game}, but 
with the same additional factor,
\[
 \| \err_s^2\|_{L^\infty(v \approx v_0)} \lesssim v_0^{-2s} (1+ v_0^{-2})^{-\frac52} t^{-1},
\qquad \| \err_s^1 \|_{L^2_v(v \approx v_0)} \lesssim 
v_0^{\frac12-2s}(1+ v_0^{-2})^{-5} t^{-1}, 
\]
which again suffices.

Finally, we consider the Fourier space error estimates. For this we
first need to switch the bounds for $y$ to the Fourier space. For a
fixed frequency $\lambda$ we have two dyadic regions in $v$ where the
hyperbolic components of $(w,r)$ are supported, namely those for which
$v_0^2 \approx \lambda$. Thus we will get two contributions in the
approximation to $(\hat w, \hat r)(t,\xi)$. As above, let us restrict
ourselves to the contribution corresponding to $v > 0$. Adding the two
components in the second term in \eqref{H} we obtain
\[
\| (2 \alpha |D| - t |D|^\frac12) y\|_{L^2} \lesssim 1.
\]
Since $y$ is localized at frequency $v_0^{-2}$, taking a Fourier transform
and estimating commutation errors via the first part of \eqref{H}, we obtain 
the main bounds for $\hat y$,
\begin{equation*}
\| \hat y\|_{L^2} \lesssim (1+v^{-2})^{-5}, \qquad \|  (\partial_\xi + i t |\xi|^{-\frac12}) \hat y\|_{L^2} 
\lesssim v_0^2.
\end{equation*}
On the other hand, the quantity to bound in $L^2_v$ and $L^\infty$
is 
\[
\hat{\err}_s = (\xi_v)^{s}\left (v^\frac32 \hat y(t,\xi_v) - 
e^{-it \xi_v} \langle \hat y,\hat \uu\rangle\right), \qquad \xi_v = -(4v^2)^{-1}.
\]
Given the expression of $\hat{\uu}$ in \eqref{hatq}, the error $\hat{\err}_s$ is estimated 
in the same fashion as in the proof of \eqref{err-est}.

\end{proof}

\subsection{The evolution of  $\gamma$}
Here we track the evolution of $\gamma(t,v)$ and prove
Proposition~\ref{p:ode}.  In view of the energy conservation relation for
the linear system \eqref{ww2d-0}, we directly obtain the relation
\[
\dot \gamma(t) = \int \tG \bar \ww + \tW \bar \ggg + i \tK_\alpha \bar \qq \, d\alpha  .
\]
We successively consider all terms on the right. With the exception of a single 
term, namely the resonant part of $G$, see below, all contributions will be placed 
into the error term $\sigma$. 
\bigskip

{\bf A. The contribution of $\bar \ggg$.}  This is 
\[
I_1 = v^{-\frac32} \int \tW e^{-i\phi} \left( \partial_{\alpha}\left[ \frac{(\alpha-vt) }{2\alpha}\chi-i \frac{ (\alpha +vt)^2}{4v^{\frac{3}{2}}t^{\frac{5}{2}}}\chi '\right]+  \left[ \frac{(\alpha-vt)}{2\alpha^2}\chi -i \frac{ (\alpha -vt)}{4v^{\frac{3}{2}}t^{\frac{5}{2}}}\chi'\right]\right) \, d\alpha.
\]
We use \eqref{pactest} to replace $W$ in terms of $\gamma$, and the
contribution of the error $\tW - t^{-\frac12}e^{i\phi} \gamma(t,v)$ is
directly estimated in both $L^2$ and $L^\infty$ via \eqref{pacerr}.

The contribution of $\gamma$, on the other hand, is written using integration by parts as 
\[
\tilde I_1 = v^{-\frac32} t^{-\frac12} \int  - \gamma_\alpha \left[ \frac{(\alpha-vt) }{2\alpha}\chi-i \frac{ (\alpha +vt)^2}{4v^{\frac{3}{2}}t^{\frac{5}{2}}}\chi '\right]+  \gamma \left[ \frac{(\alpha-vt)}{2\alpha^2}\chi -i \frac{ (\alpha -vt)}{4v^{\frac{3}{2}}t^{\frac{5}{2}}}\chi'\right] \, d\alpha.
\]
Now we can easily bound the two terms using \eqref{gamma}. 

\bigskip

{\bf B. The contribution of $\tG$ and $\tK$.}  
For this we consider in more detail the structure of $\tG$ and
$\tK$. We will successively peel off favorable terms until we are left 
only with the leading resonant part.

\medskip
{\bf B1. Quartic and higher order terms.}
We decompose them into cubic and higher terms,
\[
\tG = \tG^{(3)} + \tG^{(4+)}, \qquad \tK = \tK^{(3)} + \tK^{(4+)}.
\]
In view of \eqref{GK4},  we can estimate the contribution of the quartic and higher
terms in $L^\infty$,  
\[
\left| \int   \tG^{4+} \bar \ww +  i \tK^{4+}_\alpha \bar \qq \, d\alpha\right|
\lesssim \epsilon^4 t^{-\frac32+3C^2_{*} \epsilon^2}    \| (\ww,\qq)\|_{\dH_0}
\lesssim   \epsilon^4 v^{-\frac12} t^{-\frac54+3C^2_{*} \epsilon^2},    
\]
which suffices in $\Omega$. The $L^2$ bound is similar, using again \eqref{GK4}.

In the same way, by using \eqref{GK-diff}, we can estimate the contribution
of the difference 
\[
 (\tilde G^{3}(W,R), \tilde K^{3}(W,R)) - (\tilde
G^{3}(\tW,\tQ_\alpha), \tilde K^{3}(\tW,\tQ_\alpha)),
\]
which also contains only quartic and higher terms. 
Thus we have substituted $(\tG,\tK)$ with the cubic expressions
$(\tilde G^{3}(\tW,\tQ_\alpha), \tilde K^{3}(\tW,\tQ_\alpha))$. 
\medskip

{\bf B2. Cubic terms.}  To better understand the cubic interactions we
need the folowing heuristic analysis:

\begin{itemize}
\item[(i)] in the physical space, waves at frequency $\xi$ move with
  velocity $\pm \frac12 |\xi|^{-\frac12}$. Since our data is localized
  near the origin, it follows that the bulk of the solution at
  $(\alpha,t)$ is at space-time frequency $(\xi,\tau) =
  \left(-\dfrac{t^2}{4\alpha^2}, \dfrac{t}{2\alpha}\right)$. Thus the
  worst cubic interactions are those of waves with equal frequency.

\item[(ii)] In the frequency space, trilinear interactions of equal frequency waves 
$(\xi,\pm \sqrt{ |\xi|})$ (with $ \xi < 0$) can only lead back to the characteristic set if 
exactly one complex conjugation is present.  
\end{itemize}
This leads to the following classification of the terms in $(\tilde
G^{3}(\tW,\tQ_\alpha), \tilde K^{3}(\tW,\tQ_\alpha))$:

\begin{itemize}
\item[A.] Nonresonant trilinear terms: these are either 
(A1) terms with no complex conjugates, or  
(A2) terms with two complex conjugates.

\item[B.] Resonant trilinear terms: terms with exactly one
  conjugation.  For such terms one may further define a notion of
  principal symbol, which is the leading coefficient in the expression
  obtained by substituting the factors in the trilinear form by the
  expressions in \eqref{pactest} \footnote{Which corresponds to all
    three frequencies being equal.}  Thus one can isolate a linear subspace
  of resonant terms for which this symbol vanishes, which we call {\em
    null terms}.  Hence on the full class of resonant trilinear terms
  we can further define an equivalence relation, modulo null terms.
\end{itemize}

Now we turn our attention to the situation at hand, where we recall that
\begin{equation*}
%\label{gk-tilde2}
\left\{
\begin{aligned}
\tilde G^{(3)}(W,R) = & \  2P[ -\partial_{\alpha}P\left[ R\bar{ \W}-\bar{R}\W\right] \Re W +(R \W)_{ \alpha}  \Re W + \W \Re (\W R)]   \\ & \ -  P[\bar \W^2  R- \W(P[\bar R \W]+\bar P[R \bar \W]) ]\\
\tilde K^{(3)}(W,R) = &\  2P\left[ P [(R+\bar R)R_\alpha]\Re W   +     i \W^2 \Re W + P[R \bar R_\alpha] \Re W \right] \\ & \ +P\left[\bar{R}\bar{\W}R -R\bar{P}\left[ \bar{R}\W-R\bar{\W} \right]\right].
\end{aligned}
\right.
\end{equation*}
Based on the previous heuristics, we decompose
\begin{equation*}
%\label{gk-tilde3}
\left\{
\begin{aligned}
\tilde G^{(3)} =&\tG^{(3)}_{r}+\tG^{(3)}_{nr}+\tG^{(3)}_{null}\\
\tilde K^{(3)} = & \tK^{(3)}_{r}+\tK^{(3)}_{nr}+\tK^{(3)}_{null},
\end{aligned}
\right.
\end{equation*}
where
\begin{equation*}
%\label{gk-tilde4}
\left\{
\begin{aligned}
\tG^{(3)}_{r}=&P[(R \W)_{ \alpha}  \bar{ W} +\W R\bar{\W}]\\
\tG^{(3)}_{nr}=&P[(R \W)_{ \alpha} W  +\W \W R]\\
\tG^{(3)}_{null}=&P[ -2\partial_{\alpha}P[ R\bar{ \W}-\bar{R}\W ] \Re W +  \bar{\W} (\W\bar{ R}-\bar \W  R)+\W P[\bar{R}\W-R\bar{\W}] ]\\
\tilde K^{(3)} _{r}= &0\\
\tilde K^{(3)} _{nr}=&P\left[\bar{R}\bar{\W}R\right]  \\
\tilde K^{(3)} _{null}=&P[ -R\bar{P}\left[ \bar{R}\W-R\bar{\W} \right]+2  P[|R|^2]_\alpha \Re W + 2(RR_{\alpha}+i\W^2)\Re W] .
\end{aligned}
\right.
\end{equation*}
We will place all cubic contributions into  the error term $\sigma$,  
except for  the contribution of the resonant part  $\tG_{r}$.  

We note that for the most part the
exact form of the expressions above is irrelevant.  The only
significant matter is the coefficient of the terms in $\tG^{(3)}_{r}$,
which needs to be real\footnote{ A similar constraint would be
  required of the coefficients in $\tK^{(3)}_{r}$, if they were
  nonzero.}.

We also remark that the leading projection in all terms can be harmlessly discarded, 
since it can be moved onto the wave packets, which decay rapidly at positive frequencies,
\[
\| (\ww,\qq) - P (\ww,\qq) \|_{H^N} \lesssim t^{-N}.
\]

\medskip

{\bf B2(a). Substitution by the asymptotic expansion.}
The first step in our estimates for $(\tilde G^{3}(\tW,\tQ_\alpha),
\tilde K^{3}(\tW,\tQ_\alpha))$ is to show that we can harmlessly replace 
$(\tW,\tQ_\alpha)$ by their leading asymptotic expression in \eqref{pactest}.
Denoting the resulting expressions by $(\tilde G^{3}(\gamma),
\tilde K^{3}(\gamma))$, we claim that we can place the expression
\[
\langle (\tilde G^{3}(\tW,\tQ_\alpha) - \tilde G^{3}(\gamma), \tilde K^{3}(\tW,\tQ_\alpha))
- \tilde K^{3}(\gamma)), (\ww,\qq)\rangle_{\dot H_0}
\]
into the error term $\sigma$.

We first consider terms without inner projections. To fix the notations, 
consider the expression  $\tilde G^{(3)}_{r}$, with the projection $P$ removed.
Expanding the derivatives, we have 
\[
\tilde G^{3}_r(\tW,\tQ_\alpha) = \tQ_{\alpha\alpha} \tW_\alpha \bar \tW
+ \tQ_{\alpha} \tW_{\alpha\alpha} \bar \tW +\tQ_{\alpha\alpha} \tW_\alpha \bar \tW_\alpha.
\]
On the other hand, a direct computation, using  \eqref{pactest}, shows that
\begin{equation}\label{g3r}
\tilde G^{3}_r(\gamma) = \frac{i}{t^\frac32} \left(\frac{1}{2v}\right)^5 
e^{i\phi} \gamma |\gamma|^2 .
\end{equation}
Within the region of interest $\Omega$, the difference 
$\tilde G^{3}_r(\tW,\tQ_\alpha) - \tilde G^{3}_r(\gamma)$ is a sum of cubic expressions
with either one $\gamma$ factor and two error factors, or viceversa. 
Using \eqref{gamma} for $\gamma$   and \eqref{pacerr} for the errors, 
we immediately obtain the bound
\[
\| (1+v^{-2})^2 \tilde G^{3}_r(\tW,\tQ_\alpha) - \tilde G^{3}_r(\gamma)\|_{L^2 \cap L^\infty}
\lesssim \epsilon^3 t^{-\frac32- \frac{1}{18}+ C^2_{*} \epsilon^2} ,
\]
where both norms are estimated in $\Omega$. Matching this against $\ww$ we 
can directly place the output in the error $\sigma$.

The same strategy works for terms with an inner projection $P$, with two additional 
observations:

(i) Since $P$ is nonlocal, one also needs to consider contributions
from outside $\Omega$.  But this is straightforward due to the better
decay estimate in \eqref{first-point-tilde} for $v$ away from $1$.

(ii) Since $P$ is not bounded in  $L^\infty$, there is an additional logarithmic  loss
in the $L^\infty$ bound for the error $\sigma$.

\medskip
 
{\bf B2(b) The contribution of the null terms $\tG^{(3)}_{null}(\gamma)$ and
  $\tK^{(3)}_{null}(\gamma)$.}  Here we simply note that
$\tG^{(3)}_{null}(\gamma)= 0$ and $\tK^{(3)}_{null}(\gamma)= 0$, so
after the previous step there is nothing left to do.  We
remark that cancellation actually occurs at the bilinear level for the
``null expressions'' of type
\[
\tW_\alpha \bar \tQ_\alpha - \bar \tW_\alpha  \tQ_\alpha, \qquad 
(|\bar \tQ|^2)_\alpha, \qquad \tQ_\alpha \tQ_{\alpha \alpha}+i\tW_\alpha^2.
\]

\medskip

{\bf  B2(c) The contribution of the nonresonant terms
$\tG^{(3)}_{nr}$ and $\tK^{(3)}_{nr}$.}
Here it is important that we integrate against $\ww$ and $\qq$, as
that fixes the frequency of the output at $\xi = -
\frac{1}{4v^2}$. On the other hand the nonresonant trilinear
expression will be concentrated at frequency $3 \xi$ if no complex
conjugate occur, respectively at frequency $-\xi$ if two conjugates
occur. The easiest way to take advantage of this mismatch is via an
integration by parts argument. Consider for instance the expression in 
$\tG^{(3)}_{nr}(\gamma)$, which has the form
\[
\tG^{(3)}_{nr}(\gamma) =  - \frac{3i}{t^\frac32} \left(\frac{1}{2v}\right)^5 
e^{3 i\phi} \gamma^3 .
\]
Testing this against $\ww$, we can write
\[
\int \tG^{(3)}_{nr}(\gamma) \bar \ww \, d\alpha = 
\int  - \frac{3i}{t^\frac32} \left(\frac{t}{2\alpha}\right)^5 e^{3 i\phi} \gamma^3 \ww \, d\alpha
= \int  - \frac{3i}{t^\frac32} \left(\frac{t}{2\alpha}\right)^5 e^{2 i\phi} \gamma^3 
\tilde \chi\left(\frac{\alpha - vt}{t^\frac12 v^\frac32}\right)
v^{-\frac32}  \, d\alpha.
\]
Here the phase $2\phi$ is nonstationary, so we can integrate by parts to place 
a derivative on either  $\tilde \chi$ or on  $\gamma$. In the first case we gain 
a $t^{-\frac12}$ factor directly, while in the second case a similar gain comes from 
\eqref{gamma}.

\medskip

{\bf B2(d) The contribution of the resonant 
term $\tG^{(3)}_{r}(\gamma)$.} 
Given the expression \eqref{g3r}, all we need is to consider the 
integral
\[
I  = \int \frac{i}{t^\frac32} \left(\frac{t}{2\alpha}\right)^5 
 \gamma(t.\alpha/t)  |\gamma(t,\alpha/t)|^2  e^{-i\phi} \bar \ww \, d\alpha.
\]
Here $e^{-i\phi} \bar \ww$ has the form
\[
e^{-i\phi} \bar \ww = \frac12 v^{-\frac32} \chi\left(\frac{\alpha - vt}{t^\frac12 v^\frac32}\right)
+ v^{-1} t^{-\frac12} 
\tilde \chi\left(\frac{\alpha - vt}{t^\frac12 v^\frac32}\right)
\]
with Schwartz functions $\chi$ and $\tilde \chi$ so that $\int \chi = 1$.
We can freeze the coefficient $\frac{t}{2\alpha}$ at $\alpha = vt$ at the expense of
modifying $\tilde \chi$ to write 
\[
I = \frac{i}{t^\frac32} (2v)^{-5} \int \gamma(t.\alpha/t)
|\gamma(t,\alpha/t)|^2 \left[  \frac12 v^{-\frac32} \chi\left(\frac{\alpha - vt}{t^\frac12 v^\frac32}\right)
+ v^{-1} t^{-\frac12} 
\tilde \chi\left(\frac{\alpha - vt}{t^\frac12 v^\frac32}\right)\right] \, d\alpha := J + \tilde J.
\]
The contribution $\tilde J$ of the lower order term containing $\tilde
\chi$ is part of the error $\sigma$, and is estimated directly in
$L^2$ and $L^\infty$ using the $L^2$ and $L^\infty$ bounds for
$|\gamma|^3$ derived from \eqref{gamma}.

For the contribution $J$ of the main term containing $\chi$ we freeze $\gamma$
at the packet center to obtain the leading contribution:
\[
J = \frac{i}{2 t} (2v)^{-5} \gamma(t,v) |\gamma(t,v)|^2 + J_1,
\]
where
\[
J_1 = \frac{i}{2 t^\frac32}(2v)^{-5}
\int \left[ \gamma(t.\alpha/t)
|\gamma(t,\alpha/t)|^2-    \gamma(t.v)
|\gamma(t,v)|^2\right] v^{-\frac32} \chi\left(\frac{\alpha - vt}{t^\frac12 v^\frac32}\right) \, d\alpha .
\]
We still need to bound the last integral in $L^\infty$ and in $L^2_v$. For  
this we first use the estimates in  \eqref{gamma} to conclude that 
\[
\| v^2(1+v^{-2})^5  \partial_v [\gamma(t,v) |\gamma(t,v)|^2]\|_{L^2_v} \lesssim t^{C^2_{*}\epsilon^2} . 
\]
Then, by the same argument as in the proof of \eqref{err-est}, we obtain
\[
\| (1+v^{-2})J_1\|_{L^\infty} \lesssim t^{-\frac54+ C^2_{*}\epsilon^2} , \qquad 
 \| J_1\|_{L^2_v} \lesssim t^{-\frac32+ C^2_{*}\epsilon^2},
\]
which are more than sufficient in order to include $J_1$ in the error term $\sigma$.
Thus the proof of Proposition~\ref{p:ode} is concluded.
%%%%%%%%%%%%%%%%%%%%%%%%%%%%%%%%%%%%%%%%%%%%%%%%%%%%%%%%%%%%%%%%%%%%%%%%%%%%%%%%%%%%%%%%%%%%%%%%%%%%%%%%%%%%%%%%%%%%%%%%%%%%%%%%
%%%%%%%%%%%%%%%%%%%%%%%%%%%%%%%%%%%%%%%%%%%%%%%%%%%%%%%%%%%%%%%%%%%%%%%%%%%%%%%%%%%%%%%%%%%%%%%%%%%%%%%%%%%%%%%%%%%%%%%%%%%%%%%%
%%%%%%%%%%%%%%%%%%%%%%%%%%%%%%%%%%%%%%%%%%%%%%%%%%%%%%%%%%%%%%%%%%%%%%%%%%%%%%%%%%%%%%%%%%%%%%%%%%%%%%%%%%%%%%%%%%%%%%%%%%%%%%%%

\subsection{The asymptotic expansion of the solution}

To construct the asymptotic profile $\Psi$ we use the differential equation in  Proposition~\ref{p:ode}
for $\gamma(t,v)$. The inhomogeneous term $\sigma(t,v)$ is estimated in $L^\infty$ 
and $L^2_v$ as showed in \eqref{sigma}.
The differential equation for $\gamma$ in \eqref{gamma-ode} can be explicitly solved in polar
coordinates. Since $\sigma(t,v)$ in uniformly integrable in time, it
follows that for each $v$, $\gamma(t,v)$ is well approximated at infinity by a
solution to the unperturbed differential equation,  
in the sense that
\begin{equation}
\label{daniel}
\gamma (t,v)=\Psi\left(v \right) e^{\frac{i}2 (2v)^{-5} \ln{t} |\Psi(v)|^2}+O_{L^{\infty}}(\epsilon t^{-\frac{1}{18}+C_{*}^2\epsilon^2}).
\end{equation}
Integrating the $L^2_v$ part of \eqref{sigma}  leads to a similar $L^2_v$ bound 
\begin{equation}
\label{daniel2}
\gamma (t,v)=\Psi\left(v \right)
e^{\frac{i}2 (2v)^{-5} \ln{t} |\Psi(v)|^2}+O_{L_v^{2}}(\epsilon t^{-\frac{1}{18}+C_{*}^2\epsilon^2}).
\end{equation}
Both of these relations are valid in $\Omega$, where we know that \eqref{gamma} holds.
Then from the two relations in \eqref{pactest1} we obtain the asymptotic expansions in \eqref{asymptotics} within $\Omega$ but for the normal form variables $(\tilde{W}, \tilde{Q})$. The transition to the original variables $(W,Q)$ is straightforward in view of the expressions \eqref{nft1}, and the pointwise decay bounds for $(W, Q)$.

The next step is to establish the regularity of $\Psi$.
On one hand, from \eqref{daniel} and \eqref{daniel2} we get (within $\Omega$)
\[
\| \Psi(v) - \gamma(t,v) e^{-\frac{i}2 (2v)^{-5} |\gamma(t,v)|^2 \log t}\|_{L^2_v\cap L^{\infty}} \lesssim  \epsilon t^{-\frac{1}{18}+C_{*}^2 \epsilon^2} \log t ,
\]
while by \eqref{gamma} we have the $L^2_{v}$ bound 
\[
\Vert (1+v^{-2})^5 \gamma(t,v) e^{-\frac{i}2 (2v)^{-5} |\gamma(t,v)|^2 \log t} \|_{L^2_v} + \| v\partial_v [\gamma(t,v) e^{-\frac{i}2 (2v)^{-5} |\gamma(t,v)|^2 \log t}] \|_{L^2_v} \lesssim \epsilon t^{C_{*}^2 \epsilon^2}  \log t ,
\]
and the $L^{\infty} $ bound
\[
\Vert v^{\frac{1}{2}}(1+v^{-2})^{\frac{5}{2}} \gamma(t,v) e^{-\frac{i}2 (2v)^{-5} |\gamma(t,v)|^2 \log t} \|_{L^{\infty}}  \lesssim \epsilon t^{C_{*}^2 \epsilon^2}  .
\]
As $t$ increases, the range $[t^{-\frac19}, t^{\frac19}]$ of $|v|$ within $\Omega$ increases to cover the entire $\mathbb{R}^+$. 
Hence, by interpolation we obtain that for large enough $C_{*}$, $\Psi$ has the regularity 
\begin{equation}
\label{interpolation}
\Vert \vert v\vert ^{\frac{1}{2}-C_{*}^2\epsilon^2}(1+v^{-2})^{\frac{5}{2}-C_{*}^2\epsilon^2} \Psi \Vert  _{L^{\infty}}\lesssim \epsilon ,\quad 
\Vert (1+v^{-2})^{5-C_{*}^2\epsilon^2} \Psi \Vert  _{L^{2}_v}\lesssim \epsilon ,
\end{equation}
\[
\| \vert v\vert ^{1-C_{*}^2\epsilon^2}\Psi \|_{H^{1-C_{*}^2\epsilon^2}_v} \lesssim \epsilon.
\]

Finally, we can combine the pointwise bounds for $\Psi$ with the pointwise bounds for $(W,Q)$ to extend the error estimates in \eqref{asymptotics} to the exterior of 
$\Omega$.

\end{document}